\numberwithin{equation}{section}
\theoremstyle{plain}
\newtheorem{thm}{Theorem}[section]
\newtheorem{prop}[thm]{Proposition}
\newtheorem{lemma}[thm]{Lemma}
\newtheorem{claim}{Claim}
\theoremstyle{definition}
\newtheorem{dfn}[thm]{Definition}
\newtheorem{ex}[thm]{Example}
\theoremstyle{remark}
\newtheorem{rem}[thm]{Remark}
\newcommand{\C}{\mathbb C}
\newcommand{\N}{\mathbb N}
\newcommand{\R}{\mathbb R}
\newcommand{\ve}{\varepsilon}
\def\dim{\operatorname{dim}}
\def\ees{{\accent"5E e}\kern-.385em\raise.2ex\hbox{\char'23}\kern-.08em}
\def\EES{{\accent"5E E}\kern-.5em\raise.8ex\hbox{\char'23 }}
\def\ow{o\kern-.42em\raise.82ex\hbox{
\vrule width .12em height .0ex depth .075ex \kern-0.16em \char'56}\kern-.07em}
\def\OW{O\kern-.460em\raise1.36ex\hbox{
\vrule width .13em height .0ex depth .075ex \kern-0.16em \char'56}\kern-.07em}
\begin{document}
\title[Relative homotopy groups for polynomial maps]{Relative homotopy groups and Serre fibrations for polynomial maps}

\author[Masaharu Ishikawa]{Masaharu Ishikawa}
\address{Faculty of Economics, Keio University, 4-1-1, Hiyoshi, Kouhoku, Yokohama, Kanagawa 223-8521, Japan}
\email{ishikawa@keio.jp}

\author[Tat-Thang Nguyen]{Tat-Thang Nguyen}
\address{Institute of Mathematics, Vietnam Academy of Science and Technology, 18 Hoang Quoc Viet road, Cau Giay district, 10307 Hanoi, Vietnam}
\email{ntthang@math.ac.vn}

\thanks{The second author thanks Vietnam Institute for Advanced Study in Mathematics (VIASM)  for warm hospitality during his visit. The first author is supported by JSPS KAKENHI Grant numbers JP19K03499, JP23K03098, 
%JP17H06128, 
JP23H00081 and Keio University Academic Development Funds for Individual Research. This work is supported by JSPS-VAST Joint Research Program, Grant number JPJSBP120219602 (for JSPS) and QTJP01.02/21-23 (for VAST)}

\begin{abstract}
Let $f$ be a polynomial map from $\R^m$ to $\R^n$ with $m>n>0$ and $t_0$ be a regular value of $f$. 
For a small open ball $D_{t_0}$ centered at $t_0$, 
we show that the map $f:f^{-1}(D_{t_0})\to D_{t_0}$ is a Serre fibration if and only if $f$ is a Serre fibration over
a finite number of 
certain simple arcs starting at $t_0$.
We characterize the fibration $f:f^{-1}(D_{t_0})\to D_{t_0}$ by relative homotopy 
groups 
defined for these arcs
and use it to prove the assertion.
\end{abstract}

\maketitle

\section{Introduction}

Let $f:\R^m\to\R^n$ be a polynomial map from $\R^m$ to $\R^n$ with $m>n>0$.
A fiber $f^{-1}(t_0)$ of $f$ is said to be {\it atypical} if 
there does not exist an open neighborhood $U_{t_0}$ of $t_0\in\R^n$ such that $f$ restricted to $f^{-1}(U_{t_0})$ is a locally trivial fibration. 
The set of the images of atypical fibers is a set of measure zero~\cite{Tho69, Ver76},
which is called the {\it bifurcation set} of $f$.
It is known by H.V.~H\`{a} and D.T.~L\^{e} that a fiber of a polynomial function $f:\C^2\to\C$ of complex two-variables is atypical if and only if its Euler characteristic is different from that of a general fiber~\cite{HL85}.
This result is generalized by C. Joi\c{t}a and M. Tib\u{a}r in~\cite{JT18} for a polynomial map $f:X\to Y$ between non-singular affine varieties $X$ and $Y$ with $\dim X=\dim Y+1=m\geq 3$ 
under the condition that there is no vanishing component. Here a vanishing component is a connected component of a nearby fiber that vanishes when it goes to $f^{-1}(t_0)$.
They also studied atypical fibers of algebraic maps $f:X\to\R^{m-1}$ from real non-singular irreducible algebraic sets $X$ with $\dim X=m\geq 3$ and characterized them by the Euler characteristics and the betti numbers of them and their nearby fibers~\cite{JT17}.
There are several other studies for real polynomial maps with $1$-dimensional fibers~\cite{cp, tz,  INP19}.
In general case, no characterization in terms of homology groups is known.

In~\cite[Theorem A]{Mei02}, G. Meigniez proved that a surjective map is a Serre fibration if and only if it is
a homotopic submersion and all vanishing cycles and all emerging cycles are trivial.
Further, it is proved in~\cite[Corollary 14 (3)]{Mei02} that a surjective homotopic submersion is a Serre fibration 
if and only if it is a Serre fibration over any arc in the image.

In this paper, we study the fibration of a polynomial map as a Serre fibration.
The aim is to detect an atypical fiber in the sense of a Serre fibration. We call it a {\it homotopically atypical fiber}.
We will show that, to determine if $f^{-1}(t_0)$ is homotopically atypical, we only need to check the fibrations over certain short simple arcs starting at $t_0$.
The short arcs are given as follows. Let $\mathfrak S_f$ be a locally finite stratification of $\R^n$ such that,
for each stratum $S\in \mathfrak S_f$, $f:f^{-1}(S)\to S$ is a locally trivial fibration.
Note that the bifurcation set of $f$ is the union of strata with dimension less than $n$.
The existence of such a stratification is shown in~\cite[Theorem 3.4]{DK11}.
For a point $t_0\in\R^n$, we choose a small open ball $D_{t_0}$ in $\R^n$ centered at $t_0$
such that 
$S'\cup\{t_0\}$ is simply-connected for each connected component $S'$ of $S\cap D_{t_0}$ 
for $S\in \mathfrak S_f$.
Then, the set $\{S'\}$ constitutes a stratification of $D_{t_0}$, which we denote by $\mathfrak S_{f,D_{t_0}}$.
For each stratum $S'\in \mathfrak S_{f,D_{t_0}}$, we choose a simple arc starting at $t_0$ and lying on $S'\cup\{t_0\}$. We call it an {\it s-arc on $S'$ at $t_0$} and denote it by $\delta_{S'}$.

\begin{thm}\label{thm01}
Let $f:\R^m\to \R^n$ be a polynomial map with $m>n>0$ and $t_0\in \R^n$ be a regular value of $f$.
Then, the map $f:f^{-1}(D_{t_0})\to D_{t_0}$ is a Serre fibration if and only if $f$ is a Serre fibration over an s-arc $\delta_{S'}$ on $S'$ at $t_0$ for each stratum $S'\in\mathfrak S_{f,D_{t_0}}$.
\end{thm}

Since $t_0$ is a regular value, $f:f^{-1}(D_{t_0})\to D_{t_0}$ is surjective. Moreover, it is always a homotopic submersion because it is a polynomial map. Therefore, we only need to check if all vanishing cycles and all emerging cycles are trivial. The point of the above theorem is that we only need to check the fibrations over s-arcs at $t_0$ while checking the fibrations over all arcs on $D_{t_0}$ is required in~\cite[Corollary 14 (3)]{Mei02}.

To prove the above theorem, we study relation between the fibration $f:f^{-1}(D_{t_0})\to D_{t_0}$ and the relative homotopy sets
$\pi_i(f^{-1}(\delta_{S'}), f^{-1}(t_0), x_0)$, where $x_0$ is a point in $f^{-1}(t_0)$ and $i\in\N$, by observing vanishing and emerging cycles. 
Note that $\pi_i(f^{-1}(\delta_{S'}), f^{-1}(t_0), x_0)$ is a group if $i\geq 2$.
This set is said to be trivial if it consists of only one element.
The fibration $f:f^{-1}(D_{t_0})\to D_{t_0}$ is characterized by the relative homotopy 
sets
as follows.

\begin{thm}\label{thm11}
Let $f:\R^m\to \R^n$ be a polynomial map with $m>n>0$ and $t_0\in \R^n$ be a regular value of $f$.
The map $f:f^{-1}(D_{t_0})\to D_{t_0}$ is a Serre fibration if and only if $f$ has no vanishing component at $t_0$ and $\pi_i(f^{-1}(\delta_{S'}), f^{-1}(t_0), x_0)$ are trivial for any $i\in\N$, $x_0\in f^{-1}(t_0)$, and any $S'\in\mathfrak S_{f,D_{t_0}}$.
\end{thm}

Remark that $\pi_i(f^{-1}(\delta_{S'}), f^{-1}(t_0), x_0)$ for $S'\in\mathfrak S_{f,D_{t_0}}$ does not depend on the choice of an s-arc $\delta_{S'}$ on $S'\cup\{t_0\}$ since $S'\cup\{t_0\}$ is simply-connected and $f$ is a locally trivial fibration over $S'$.

It is worth noting that for a complex polynomial map $f:\C^m\to\C^{m-1}$, it is known that the fiber $f^{-1}(t_0)$ over a regular value $t_0\in\C^{m-1}$ of $f$ is not atypical if and only if  the inclusion of each fiber $f^{-1}(t)$ into $f^{-1}(D_{t_0})$ is a weak homotopy equivalence for all $t\in D_{t_0}$~\cite[Theorem 4.4]{Ngu13}.
The above theorem asserts that, instead of checking the weak homotopy equivalence for all $t\in D_{t_0}$, we only need to check weak homotopy equivalences for 
a finite number of s-arcs starting at $t_0$.

In Section~2, we introduce the stratification that we use in this paper and recall the definitions of vanishing and emerging cycles in~\cite{Mei02}.
In Section~3, we study vanishing and emerging cycles of $f:f^{-1}(D_{t_0})\to D_{t_0}$ over s-arcs.
Section~4 is devoted to the proofs of Theorems~\ref{thm01} and~\ref{thm11}.
In Section~5, we give an example of a fibered map given by polynomials that has an atypical fiber with the same topology as its nearby fiber but has non-trivial vanishing and emerging $1$-cycles.

\section{Preliminaries}

Throughout the paper, for a topological space $X$, 
$\text{Int\,}X$ represents the interior of $X$ and $\partial X$ represents the boundary of $X$.
An $i$-dimensional sphere $S^i$ is the boundary of an $i+1$-dimensional ball. In particular, $S^0$ consists of two points.

\subsection{Stratification of $D_{t_0}$}

The stratification used in this paper is defined as follows.
Let $Z_0,Z_1,\ldots,Z_d$ be closed semi-algebraic sets in $\R^n$ such that
$Z_{i-1}\subset Z_i$ for $i=1,\ldots,d$, $Z_d=\R^n$, and each difference $Z_{i}\setminus Z_{i-1}$ is a smooth submanifold of $\R^n$.
Let $\mathfrak S$ be the set of connected components of  $Z_{i}\setminus Z_{i-1}$ for $i=0,1,\ldots,d$,
where $Z_{-1}=\emptyset$. 
By the construction, $\R^n$ is a disjoint union of smooth submanifolds in $\mathfrak S$. The set $\mathfrak S$ is a {\it strafitication} of $\mathbb R^n$.

Let $f:\R^m\to\R^n$ be a polynomial map from $\R^m$ to $\R^n$ with $m>n>0$.
As mentioned in~\cite[Theorem 3.4]{DK11}, there exists a stratification $\mathfrak S_f$ of $\R^n$ such that, for each $S\in\mathfrak S_f$, the map $f:f^{-1}(S)\to S$ is a locally trivial fibration.
Let $t_0$ be a point in $\R^n$. We can choose a small open ball $D_{t_0}$ in $\R^n$ centered at $t_0$ such that $S'\cup\{t_0\}$ is simply-connected for each connected component $S'$ of $S\cap D_{t_0}$ for $S\in \mathfrak S_f$. Then, the set $\{S'\}$ constitutes a stratification of $D_{t_0}$, which we denote by $\mathfrak S_{f,D_{t_0}}$. Each element in  $\mathfrak S_{f,D_{t_0}}$ is called a {\it stratum} of $\mathfrak S_{f,D_{t_0}}$. Since $\mathfrak S_f$ is locally finite, $\mathfrak S_{f,D_{t_0}}$ is a finite set.

\subsection{Vanishing and emerging cycles}

In this subsection, we recall the definitions of vanishing cycles and emerging cycles in~\cite{Mei02}.

Let $E$ and $B$ be topological spaces.
A map $f:E\to B$ is called a {\it homotopic submersion} if every map $g:X\to E$ from a polytope $X$ 
satisfies that every germ-of-homotopy for $f\circ g$ lifts to a germ-of-homotopy for $g$.
Here a germ-of-homotopy means that two homotopies $H, H':X\times [0,1] \to E$ are the same germ if they coincide in a neighborhood of $X\times \{0\}$.
In this paper, $f:E\to B$ is always a polynomial map and it is a homotopic submersion.

Let $S^i$ denote the $i$-dimensional sphere, where $i\geq 0$.
For a fibred map $g:S^i\times [0,1]\to E$, define $g_s:S^i\to E$ by $g_s(x)=g(x,s)$.

\begin{dfn}
A {\it vanishing $i$-cycle} is a fibred map $g:S^i\times  [0,1]\to E$ 
such that $g_s$ is null-homotopic in $f^{-1}(s)$ for $s>0$.
A vanishing $i$-cycle is said to be {\it trivial} if $g_0$ is also null-homotopic in $f^{-1}(0)$.
\end{dfn}

\begin{dfn}
An {\it emerging $i$-cycle} is a fibred map $g:S^i\times (0,1]\to E$ such that the image $g_s(x_0)$ of the base point $x_0\in S^i$ has a limit for $s\to 0$. An emerging $i$-cycle is said to be {\it trivial}
if there exist $\varepsilon>0$ and a fibered map $g':S^i\times [0,\varepsilon)\to E$ such that, 
for each $0<s<\varepsilon$,
the base points coincide, that is $g(x_0,s)=g'(x_0,s)$, and $g_s$ and $g_s'$ are homotopic in $f^{-1}(s)$ relative to the base point.
\end{dfn}

Using these notions, Meigniez proved the following theorem.

\begin{thm}[Theorem A in~\cite{Mei02}]\label{thmA}
A surjective map is a Serre fibration if and only if it is a homotopic submersion and all vanishing and emerging cycles are trivial.
\end{thm}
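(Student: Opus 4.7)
The equivalence decomposes into a straightforward direction, in which being a Serre fibration is used to verify homotopic submersion and to trivialize vanishing and emerging cycles by direct appeals to the homotopy lifting property (HLP), and a substantive converse in which the cycle hypotheses are used to construct global lifts from local ones.

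For the forward direction I would assume $f$ is a Serre fibration. Homotopic submersion is immediate: a germ-of-homotopy of $f\circ g$ is the restriction of a genuine homotopy, which admits a lift by HLP whose germ answers the definition. For a vanishing $i$-cycle $g:S^i\times[0,1]\to E$, pick $s_0\in(0,1]$ and a null-homotopy $h:D^{i+1}\to f^{-1}(s_0)$ of $g_{s_0}$, and glue $g|_{S^i\times[0,s_0]}$ to $h$ along $S^i\times\{s_0\}$ (viewing the union as a disk via a cone-like identification) to obtain $\psi:D^{i+1}\to E$ with $\psi|_{\partial D^{i+1}}=g_0$. The map $f\circ\psi$ admits an obvious rel-boundary contraction to the constant map at $0\in B$; lifting this contraction via HLP for the pair $(D^{i+1},\partial D^{i+1})$ yields a null-homotopy of $g_0$ inside $f^{-1}(0)$. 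For an emerging $i$-cycle, the composite $f\circ g$ depends only on the $[0,1]$-coordinate, and the basepoint hypothesis lets this composite extend continuously across $s=0$; lifting the extension via HLP relative to the partial lift on $\{x_0\}\times[0,1]\cup S^i\times\{1\}$ (completed by $p_0$ at $s=0$) produces the trivializing map $g'$, and the fiberwise homotopy between $g_s$ and $g'_s$ rel basepoint, for small $s$, follows from uniqueness of lifts up to fiber homotopy.

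For the converse I would verify HLP by induction on the skeleta of a polytope $X$, using the surjectivity of $f$ to start fibers of lifts. Given $g:X\to E$ and $H:X\times[0,1]\to B$ with $H(\cdot,0)=f\circ g$, homotopic submersion supplies a lift $\tilde H$ on $X\times[0,\varepsilon)$. Let $T$ be the supremum of times $t$ admitting a continuous lift on $X\times[0,t)$ extending $g$; the plan is to show $T=1$ and the supremum is attained. If some limit $\lim_{t\to T^-}\tilde H(x,t)$ fails to exist, the failure, after cellwise reparametrization of a neighborhood as $S^i\times(0,1]$, is precisely an emerging $i$-cycle; its triviality supplies a continuous extension to $X\times\{T\}$. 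Once the lift exists on $X\times[0,T]$, reapplying homotopic submersion at time $T$ continues the lift past $T$, so $T<1$ would contradict maximality. When combining lifts built on different cells or on overlapping coordinate charts, discrepancies between two partial lifts extending the same boundary datum organize into a vanishing cycle of the appropriate dimension, which by hypothesis is trivial, so the lifts can be homotoped to agree.

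The hard part is the bookkeeping in the converse: making sure the obstructions that arise in the cellwise/timewise induction are genuinely of the shapes $S^i\times[0,1]$ and $S^i\times(0,1]$ appearing in Meigniez's definitions. Concretely, one must choose the reparametrizations near the critical time $T$, and across cell-overlaps, so that the failure of continuous extension is presented as an emerging cycle and the failure of patching is presented as a vanishing cycle. Once this identification is set up cleanly, the triviality hypotheses translate directly into the ability to extend and glue, and the induction on skeleta concludes. I expect this identification—and the verification that the lifts produced by cycle triviality can be chosen to depend continuously on the cell parameter—to be the principal technical burden, whereas the forward direction reduces to routine applications of HLP.
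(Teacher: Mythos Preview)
The paper does not prove this statement: Theorem~\ref{thmA} is quoted verbatim as ``Theorem~A in~\cite{Mei02}'' and used as a black box throughout (see the proof of Proposition~\ref{prop21} and Lemma~\ref{lemma42}). There is therefore no proof in the paper to compare your attempt against.

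For what it is worth, your sketch is broadly in the spirit of Meigniez's original argument: the forward direction is indeed routine HLP, and the converse proceeds by an induction in which obstructions to extending or gluing partial lifts are recognized as emerging and vanishing cycles respectively. You are also right that the substantive work lies in the converse, specifically in arranging the obstructions to have exactly the shapes in the definitions and in ensuring the trivializations patch continuously. If you want to see the details, you should consult~\cite{Mei02} directly; the present paper neither reproduces nor summarizes them.
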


\begin{rem}\label{rem24}
If $f:E\to B$ is a Serre fibration then it has no vanishing component. Cf.~\cite[Example 4]{Mei02}.
\end{rem}

\section{Fibrations over s-arcs}\label{sec3}

Let $f:\R^m\to\R^n$ be a polynomial map from $\R^m$ to $\R^n$ with $m>n>0$.
For each $S'\in \mathfrak S_{f,D_{t_0}}$ except the case $S'=\{t_0\}$, choose an embedding $\gamma:[0,1]\to S'\cup\{t_0\}$ of the unit interval $[0,1]$ such that $\gamma(0)=t_0$ and $\gamma((0,1])\subset S'$.
The image of $\gamma$ is a simple arc on $S'\cup\{t_0\}$ and we call it an {\it s-arc on $S'$ at $t_0$}.
For an s-arc $\delta_{S'}$ on $S'$ at $t_0$, we consider the relative homotopy set $\pi_i(f^{-1}(\delta_{S'}), f^{-1}(t_0), x_0)$ for $i\geq 1$, where $x_0$ is a point in $f^{-1}(t_0)$. For simplicity, we denote it as $\pi_i(f,  \delta_{S'}, x_0)$. 
Note that $\pi_i(f,  \delta_{S'}, x_0)$ is a group if $i\geq 2$.
If $f^{-1}(t_0)$ is connected, we denote it as $\pi_i(f, \delta_{S'})$.
A trivial element in $\pi_1(f,  \delta_{S'}, x_0)$ is the element corresponding to the constant map to $x_0$.
The set $\pi_1(f,  \delta_{S'}, x_0)$ is said to be {\it trivial} if it consists of only the trivial element.
The notation $\pi_1(f,  \delta_{S'}, x_0)=1$ means that $\pi_1(f,  \delta_{S'}, x_0)$ is trivial.

%In this section, we prove the following theorem.

The main theorem in this section is the following.

\begin{thm}\label{prop21}
Suppose that $f$ has no vanishing component at $t_0$. Let $\delta_{S'}$ be an s-arc on 
$S'\in\mathfrak S_{f,D_{t_0}}$
at $t_0$. Then, the following are equivalent:
\begin{itemize}
\item[(1)] $\pi_i(f, \delta_{S'}, x_0)=1$ for any $i\geq 1$ and $x_0\in f^{-1}(t_0)$.
\item[(2)] All vanishing and emerging cycles of $f|_{f^{-1}(\delta_{S'})}$ are trivial.
\item[(3)] $f|_{f^{-1}(\delta_{S'})}$ is a Serre fibration.
\end{itemize}
\end{thm}

%The equivalence of (2) and (3) follows from Theorem~\ref{thmA}. We prove two propositions in the next two subsections and give the proof of Proposition~\ref{prop21}.

Throughout this section, we use the notations $I^i=[0,1]^i$ 
for $i\geq 1$,  $I^0=\{0\}$, 
$J_i=\partial I^i \setminus (\text{Int\,}I^{i-1}\times\{0\})$ for $i\geq 2$, and $J_1=\{0\}$.

\subsection{An interpretation of relative homotopy groups}

Let $B_r$ denote the $m$-dimensional closed ball centered at the origin $0\in\R^m$ with radius $r>0$.

\begin{lemma}\label{lemma1}
There exists a sufficiently large real number $R$ such that
\begin{itemize}
\item[(1)] $f^{-1}(t_0)$ is transverse to $\partial B_r$ for all $r\geq R$, and
\item[(2)] $f^{-1}(t_0)\setminus B_r$ is diffeomorphic to $(f^{-1}(t_0)\cap\partial B_r) \times \R$ for all $r\geq R$.
\end{itemize}
\end{lemma}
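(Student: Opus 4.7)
The plan is to analyze the squared distance function $\rho\colon\R^m\to\R$ given by $\rho(x)=\|x\|^2$ restricted to the smooth submanifold $V:=f^{-1}(t_0)$; here $V$ is smooth because $t_0$ is a regular value of $f$.

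For part~(1), I would first observe that $x\in V$ is a critical point of $\rho|_V$ exactly when the position vector $x$ is normal to $T_xV$, equivalently when $V$ fails to meet $\partial B_{\|x\|}$ transversally at $x$. Hence (1) reduces to showing that $\rho|_V$ has no critical value exceeding $R^2$ for sufficiently large $R$. The critical locus of $\rho|_V$ is semi-algebraic, so by Tarski--Seidenberg its image under $\rho$ is a semi-algebraic subset of $\R$; by Sard's theorem this image has Lebesgue measure zero; and a semi-algebraic subset of $\R$ of measure zero must be finite, because in any semi-algebraic cell decomposition of $\R$ the one-dimensional cells are intervals, which carry positive measure. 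Choosing $R$ with $R^2$ strictly larger than every critical value then gives (1).

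For part~(2), fix any $r\geq R$. Part~(1) guarantees an $\varepsilon>0$ with no critical values of $\rho|_V$ in $(r^2-\varepsilon,\infty)$, so the restriction
\[
\rho\colon V\cap\rho^{-1}((r^2-\varepsilon,\infty))\to(r^2-\varepsilon,\infty)
\]
is a smooth submersion; it is also proper, because $\rho$ is proper on $\R^m$ and the preimage of any compact sub-interval is closed and bounded in $\R^m$. Ehresmann's fibration theorem then yields a locally trivial smooth fibration over the contractible base $(r^2-\varepsilon,\infty)$, which must therefore be globally trivial. Restricting the total space to $V\setminus B_r=V\cap\rho^{-1}((r^2,\infty))$ gives
\[
V\setminus B_r\ \cong\ (V\cap\partial B_r)\times(r^2,\infty)\ \cong\ (V\cap\partial B_r)\times\R,
\]
as required.

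The only real obstacle is the semi-algebraic input for (1)---that the critical locus of $\rho|_V$ is semi-algebraic (Tarski--Seidenberg) and that a semi-algebraic subset of $\R$ of measure zero is necessarily finite. Once these standard facts are in hand, the rest of the argument is a routine application of Sard's theorem, Ehresmann's theorem, and the triviality of smooth fibre bundles over contractible bases.
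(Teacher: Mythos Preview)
Your argument is correct and follows the same skeleton as the paper: restrict the squared-distance function $\psi(x)=\|x\|^2$ to $V=f^{-1}(t_0)$, show it has no large critical values, and then invoke Ehresmann for part~(2).

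The one genuine difference is in how you bound the critical values. The paper appeals to the Curve Selection Lemma (as in Milnor's book): if $\psi|_V$ had critical points with $\|x\|\to\infty$, one could select an analytic arc of such points going to infinity and derive a contradiction. You instead argue that the critical locus is algebraic, project it to $\R$ via Tarski--Seidenberg, and then combine Sard's theorem with the fact that a measure-zero semi-algebraic subset of $\R$ is finite. Both routes are standard and roughly comparable in depth; yours has the minor advantage of giving finiteness of the critical-value set directly (rather than just boundedness of the critical locus), while the paper's Curve Selection argument is more in keeping with the singularity-theory toolkit used elsewhere in the article and avoids invoking Sard. Either way, part~(2) then follows exactly as you wrote, via properness of $\rho$ and Ehresmann over the contractible base.
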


\begin{proof}
The assertion~(1) is well-known. 
Let $\psi:f^{-1}(t_0)\to\R$ be a function given by $\psi(x)=\|x\|^2$. 
By the Curve Selection Lemma~\cite{Mil68},
it can be proved that $\psi$ has no critical point outside $B_R$ if $R$ is sufficiently large.
Then the assertion~(2) follows from Ehresmann's Fibration Theorem~\cite{Ehr47}.
\end{proof}

We fix $R$ in Lemma~\ref{lemma1}
and assume that an s-arc $\delta_{S'}=\gamma([0,1])$ is sufficiently short 
so
that
$f^{-1}(t)$ is transverse to $\partial B_R$ for any $t\in \gamma([0,1])$.
In particular, the restriction of $f$ to $f^{-1}(\gamma([0,1]))\cap B_R$ is a locally trivial fibration.
Note that the assumption of $\delta_{S'}$ being short is not essential in the argument below since $f$ is a locally trivial fibration over $S'$. 
For a subset $X$ of $[0,1]$, we denote 
\[
 E_X=f^{-1}(\gamma(X)) \quad\text{and} \quad
 E^R_X=f^{-1}(\gamma(X))\cap B_R.
\]
If $X$ is a point $x$ in $[0,1]$ then we denote $E_{\{x\}}$ by $E_x$ for simplicity.
Set $E^-_X=E_X\setminus (E_0\setminus B_R)$.

This subsection is devoted to the proof of the next proposition.
This proposition is important since it shows that the relative homotopy groups $\pi_i(E_{[0,1]},E_0, x_0)$ over the interval $[0,1]$ are determined only by information described on a nearby fiber.

\begin{prop}\label{lemma210}
$\pi_i(E_{[0,1]},E_0, x_0)\cong \pi_i(E_1, E_1^R, x_1^R)$
for 
$i\geq 1$,
where $x_0$ is a point on $E_0$ 
and $x_1^R$ is a point in the intersection of $E_1^R$ and the connected component of $E_{[0,1]}^R$ containing $x_0$.
\end{prop}

The proposition is proved by combining the following four lemmas.

\begin{lemma}\label{lemma26}
$\pi_i(E_{[0,1]}, E_0, x_0)\cong \pi_i(E^-_{[0,1]}, E^R_0, x_0^R)$ for all $i\geq 1$,
where $x_0$ is a point in $E_0$ and $x_0^R$ is a point in the intersection of $E_0^R$ and the connected component of $E_0$ containing $x_0$. 
\end{lemma}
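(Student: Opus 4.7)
The plan is to construct an explicit pair-level homotopy equivalence $(E_{[0,1]},E_{0})\simeq(E^-_{[0,1]},E_{0}^{R})$, from which the asserted isomorphism of relative homotopy groups follows by homotopy invariance.

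The first step is to promote Lemma~\ref{lemma1} to a uniform statement over $\gamma([0,1])$. By continuity of the gradient and shortness of $\gamma([0,1])$, the function $\psi(x)=\|x\|^{2}$ should have no critical point on $E_{s}\setminus B_{R}$ for every $s\in[0,1]$. Consequently the map
\[
\Psi: E_{[0,1]}\setminus\mathrm{Int}\,B_{R}\longrightarrow \gamma([0,1])\times[R^{2},\infty),\qquad \Psi(x)=(f(x),\|x\|^{2}),
\]
is a proper submersion, and Ehresmann's theorem applied over the contractible base yields a trivialization
\[
\Phi:(E_{0}\cap\partial B_{R})\times\gamma([0,1])\times[R^{2},\infty)\xrightarrow{\;\cong\;} E_{[0,1]}\setminus\mathrm{Int}\,B_{R}
\]
satisfying $f\circ\Phi(y,s,u)=\gamma(s)$ and $\|\Phi(y,s,u)\|^{2}=u$.

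Next, pick a continuous cutoff $\eta:[0,1]\to[0,1]$ with $\eta(0)=1$ and $\eta\equiv 0$ on $[\epsilon,1]$ for some small $\epsilon>0$, and define $H:E_{[0,1]}\times[0,1]\to E_{[0,1]}$ to be the identity on $E^{R}_{[0,1]}$ and
\[
H_{t}\bigl(\Phi(y,s,u)\bigr)=\Phi\bigl(y,\,s,\,(1-t\eta(s))u+t\eta(s)R^{2}\bigr)
\]
on the trivialized tail. The two formulas agree on $\partial B_{R}$ (where $u=R^{2}$), so $H$ is continuous. By construction $H_{0}=\mathrm{id}$, each $H_{t}$ preserves $f$-fibers, and since $\eta(0)=1$ the entire $E_{0}$-tail is collapsed to $E_{0}\cap\partial B_{R}$ at time $1$, giving $H_{1}(E_{0})\subset E_{0}^{R}$. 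For $s>0$ the tail is merely shortened, so it stays in $E_{s}\subset E^-_{[0,1]}$; hence $H_{1}(E_{[0,1]})\subset E^-_{[0,1]}$ and $H$ preserves $E^-_{[0,1]}$ throughout.

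Together these properties exhibit $H_{1}$ as a pair-map homotopy inverse to the inclusion $(E^-_{[0,1]},E_{0}^{R})\hookrightarrow(E_{[0,1]},E_{0})$, with pair-homotopy $H$, so the induced map on relative $\pi_{i}$ is an isomorphism based at $H_{1}(x_{0})$. To match the prescribed basepoint $x_{0}^{R}$, a standard change of basepoint along a path in $E_{0}^{R}$ suffices; such a path exists because the retraction $E_{0}\to E_{0}^{R}$ sends the connected component of $E_{0}$ containing $x_{0}$ into a single connected component of $E_{0}^{R}$. The main obstacle lies in the first step: critical points of $\psi|_{E_{s}}$ could a priori accumulate toward infinity as $s\to 0$, and ruling this out should follow from a Curve-Selection-Lemma style argument or a semialgebraic uniformity estimate exploiting the polynomial nature of $f$.
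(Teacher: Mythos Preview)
Your deformation-retraction strategy is appealing, but the first step has a genuine gap that cannot be patched by the arguments you suggest. For $\Psi=(f,\psi)$ to be a submersion on $E_{[0,1]}\setminus\mathrm{Int}\,B_R$ you need $\psi|_{E_s}$ to have no critical point outside $B_R$ for \emph{every} $s\in[0,1]$. Lemma~\ref{lemma1} gives this only at $s=0$; for $s>0$ it is precisely the Malgrange ($M$-tameness) condition along the arc, and its failure is exactly the mechanism that produces bifurcation values at infinity---which is the situation this lemma is meant to cover. Neither ``continuity of the gradient'' nor shortening $\gamma$ helps: when $t_0$ is such a value, critical points of $\psi|_{E_s}$ escape to infinity as $s\to 0$, so the obstruction persists on every sub-arc. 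A Curve-Selection argument does yield transversality of $E_s$ to $\partial B_r$ for $r$ in a \emph{bounded} window $[R,R+a]$ and $s\in[0,b]$ with $b=b(a)$, but not for all $r\geq R$ simultaneously; hence your global trivialization $\Phi$ need not exist.

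The paper avoids this by abandoning any global retraction. It applies the five lemma, reducing to the claim that $E^-_{[0,1]}\hookrightarrow E_{[0,1]}$ induces isomorphisms on absolute $\pi_i$ (the inclusion $E_0^R\hookrightarrow E_0$ being a homotopy equivalence by Lemma~\ref{lemma1}(2)). It then handles one class $[h]$ at a time: compactness of $h(I^i)$ gives $a$ with $h(I^i)\cap E_0\subset B_{R+a}$; Curve Selection then supplies $b,c>0$ with $E_s\pitchfork\partial B_r$ on the finite box $[R,R+a+c]\times[0,b]$, and an explicit fiber-preserving homotopy supported in that box pushes $h$ into $E^-_{[0,1]}$. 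The essential point is that the radius $a$ is allowed to depend on the class $[h]$, so no uniform bound---and hence no global trivialization of the tail---is ever required.
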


Remark that $x_0^R\in E_0^R$ always exists by the condition~(2) in Lemma~\ref{lemma1}.

\begin{proof}
It is enough to prove the assertion for $x_0^R=x_0$. We have the following commutative diagram of exact sequences:
\[
\begin{CD}
\pi_i(E_0^R, x_0^R) @>>> \pi_i(E_{[0, 1]}^{-}, x_0^R) @>>> \pi_i(E_{[0, 1]}^{-}, E_0^R, x_0^R)
   @>>> \pi_{i-1}(E_0^R, x_0^R)  \\
   @V{i^R_{*}}VV @V{i_{*}}VV @VVV @V{i^R_{*}}VV \\
\pi_i(E_0, x_0^R) @>>> \pi_i(E_{[0, 1]}, x_0^R) @>>> \pi_i(E_{[0, 1]}, E_0, x_0^R) 
   @>>> \pi_{i-1}(E_0, x_0^R),
\end{CD}
\]
where two lines are exact and vertical arrows are induced from the inclusions. The condition (2) in Lemma \ref{lemma1} implies that all morphisms $i_{*}^R$ are isomorphic. In order to prove the assertion, we will prove that $i_{*}$ are isomorphic.
The map $i_*:\pi_0(E_{[0, 1]}^{-}, x_0^R)\to \pi_0(E_{[0, 1]}, x_0^R)$ is a bijection because 
$\pi_0(E_{[0, 1]}^{-}, x_0^R)$ and $\pi_0(E_{[0, 1]}, x_0^R)$ correspond to the sets of connected components of $E^-_{[0,1]}$ and $E_{[0,1]}$, respectively. Hence, it is enough to show the isomorphisms for $i\geq 1$.

We first prove the surjectivity. 
For $i\geq 1$, let $[h]$ be an element in $\pi_i(E_{[0, 1]}, x_0^R)$, where $h: I^i\to E_{[0, 1]}$ is a continuous map such that $h(\partial I^i)= x_0^R$. 
Choose $a>0$ such that 
$h(I^i)\cap E_0 \subset B_{R+a}$, and choose $b>0$ and $c>0$ small enough so that $E_s$ is transverse to $\partial B_r$ for all $R\leq r\leq R+a+c$ and all $s\in [0, b]$ and 
$h(I^i)\cap E_{[0,b]}\subset B_{R+a+c}$.
The existence of $b$ satisfying the transversality condition can be proved by the Curve Selection Lemma. 
If we choose $b>0$ sufficiently small further then we can find $c>0$ satisfying the above condition.
The restriction of $f$ to $E_{[0,b]}\cap B_{R+a+c}$ is a locally trivial fibration. 
In particular, $E_{[0,b]}\cap (B_{R+a+c}\setminus \text{Int\,}B_R)$ is diffeomorphic to $\partial E_0^R\times [R,R+a+c]\times [0,b]$. Let $(x', r, s)$ be the coordinates of  $\partial E_0^R\times [R,R+a+c]\times [0,b]$, where $x'\in \partial E_0^R$, $r\in [R, R+a+c]$ and $s\in [0,b]$. 

We construct a homotopy $\phi_\tau: (E_{[0,b]}\cap B_{R+a+c})\cup E_{[b,1]}\to E_{[0, 1]}$ with parameter $\tau \in [0,1]$ as follows:
If $x=(x',r,s)\in E_{[0, b]}\cap (B_{R+a+c}\setminus \text{Int\,}B_R)$ with $s\leq\frac{r-R}{a+c}b$, 
then 
\[
    \phi_\tau(x):=\left(x', r, 
%    \left(1-\frac{r-R}{a+c}\right) s+\frac{r-R}{a+c}((1-\tau)s+\tau b)
\left(1-\frac{r-R}{a+c}\tau\right)s+\frac{r-R}{a+c}b\tau
\right)
\]
and $\phi_\tau(x)=x$ otherwise. See Figure~\ref{fig3}.
One can check that the map 
\[
   \Phi: ((E_{[0,b]}\cap B_{R+a+c})\cup E_{[b,1]})\times [0,1]\to E_{[0, 1]}, \quad (x, \tau)\mapsto \phi_\tau(x)
\]
is continuous, $\Phi(x, 0)=x$, and 
$\Phi(x, \tau)\in E^{-}_{[0, 1]}$ for all $x$ and all $\tau>0$. 
Then $\phi_1\circ h$ gives an element in  $\pi_i(E_{[0, 1]}^{-}, x_0^R)$ which is equal to $[h]$ in $\pi_i(E_{[0, 1]}, x_0^R)$. 
Thus, $i_*$ is a surjection.

\begin{figure}[htbp]
\includegraphics[scale=0.8, bb=130 581 459 707]{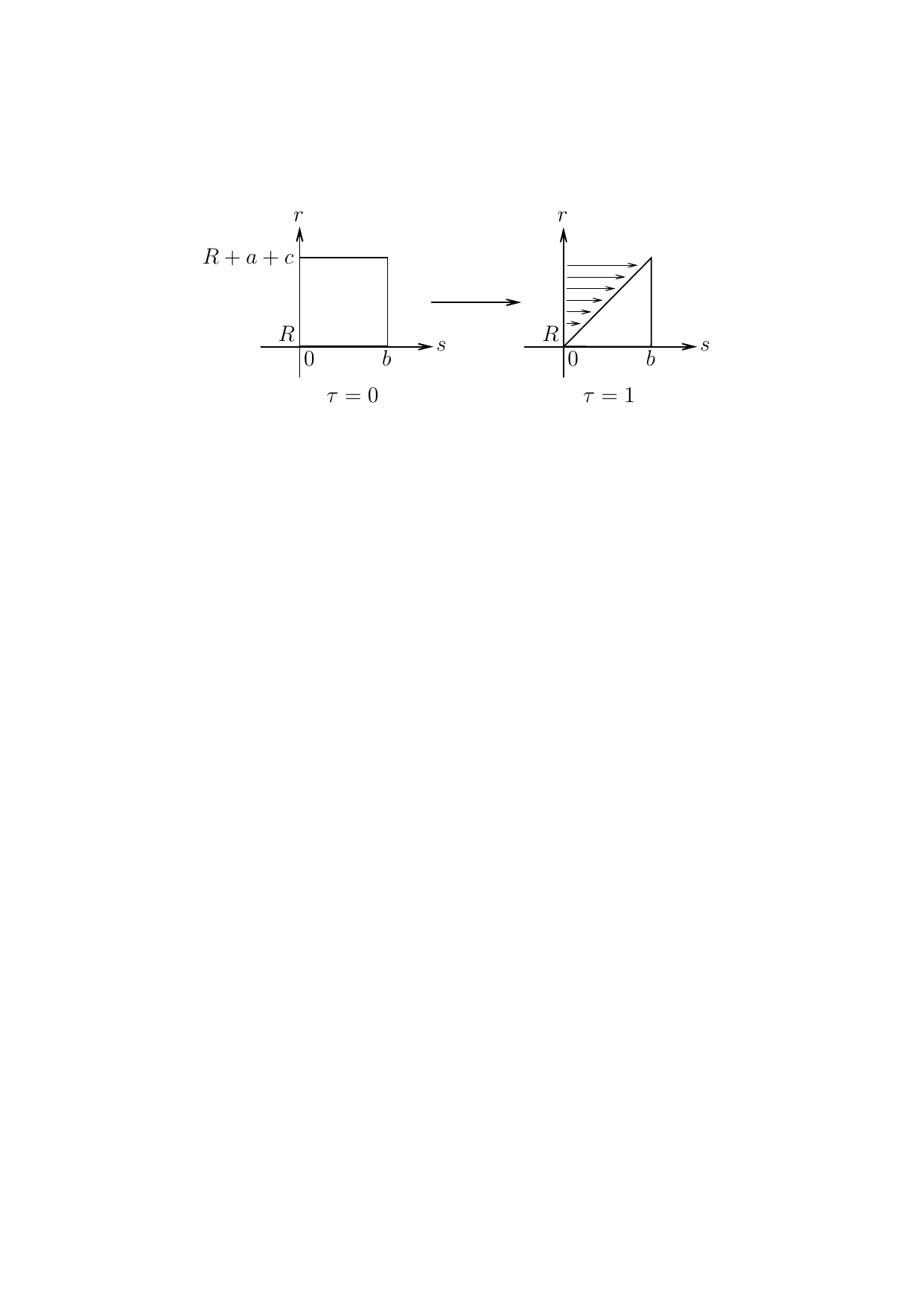}
\caption{The homotopy $\phi_\tau$.}
\label{fig3}
\end{figure}

Now we prove the injectivity. Let $[h_1], [h_2]$ be two elements in $\pi_i(E_{[0, 1]}^{-}, x_0^R)$, where $h_1, h_2: I^i\to E_{[0, 1]}^{-}$ such that there is a homotopy $H: I^i\times [0,1]\to E_{[0, 1]}$ 
with $H(\phantom{x}, 0)=h_1$, $H(\phantom{x}, 1)=h_2$ and $H(\partial I^i\times [0,1])= x_0^R$. 
Choose $a>0$ such that $B_{R+a}$ contains the image of $H$. Applying the same argument as above, we can construct a homotopy $\phi_\tau: (E_{[0, b]}\cap B_{R+a+c})\cup E_{[b,1]}\to E_{[0, 1]}$ such that $\phi_0(x)=x$ for all $x$, $\phi_1(x)\in E_{[0, 1]}^{-}$ for all $x$ and $\phi_\tau(x)=x$ for all $\tau$ and all $x\in B_R$. 
Thus $\phi_\tau\circ h_k$ gives a homotopy between $h_k$ and $\phi_1(h_k)$ in $E_{[0, 1]}^{-}$ for each $k=1, 2$. 
Also, $\phi_1\circ H(\phantom{x}, \tau)$ gives a homotopy between $\phi_1(h_1)$ and $\phi_1(h_2)$ 
in $E_{[0, 1]}^{-}$. Hence, $[h_1]= [h_2]$ in $\pi_i(E_{[0, 1]}^{-}, x_0^R)$. 
Thus, $i_*$ is an injection.
\end{proof}

\begin{lemma}\label{lemma27}
$\pi_i(E_{[0,1]}^-, E^R_0, x_0^R)\cong \pi_i(E_{[0,1]}^-, E^R_{[0,1]}, x^R_*)$ for
$i\geq 1$,
where $x_0^R$ is a point in $E_0^R$ and $x_*^R$ is a point
in 
%the intersection of $E_{[0,1]}^R$ and 
the connected component of 
$E_{[0,1]}^R$ 
containing $x_0^R$. 
\end{lemma}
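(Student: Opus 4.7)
The plan is, after a brief base-point reduction, to exhibit $E^R_0$ as a strong deformation retract of $E^R_{[0,1]}$ and then quote the long exact sequence of the triple $E^R_0\subset E^R_{[0,1]}\subset E^-_{[0,1]}$. Since $x_0^R$ and $x^R_*$ lie in the same path component of $E^R_{[0,1]}$, a path between them in $E^R_{[0,1]}$ provides the standard change-of-basepoint isomorphism
$$\pi_i(E^-_{[0,1]},E^R_{[0,1]},x_0^R)\cong\pi_i(E^-_{[0,1]},E^R_{[0,1]},x^R_*),$$
so it suffices to prove the isomorphism of the statement with the common base point $x_0^R\in E^R_0$.

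As was already pointed out immediately before Lemma~\ref{lemma26}, the restriction of $f$ to $E^R_{[0,1]}=f^{-1}(\gamma([0,1]))\cap B_R$ is a locally trivial fibration over the compact interval $\gamma([0,1])$. Any such fibration is trivial, so there is a trivialization $E^R_{[0,1]}\cong E^R_0\times[0,1]$ identifying $E^R_0$ with $E^R_0\times\{0\}$ and carrying $x_0^R$ to $(x_0^R,0)$. Linear retraction in the second coordinate then furnishes a strong deformation retraction of $E^R_{[0,1]}$ onto $E^R_0$ fixing $x_0^R$, and in particular
$$\pi_i(E^R_{[0,1]},E^R_0,x_0^R)=0\quad\text{for every }i\geq 1,$$
while $\pi_0(E^R_{[0,1]},E^R_0)$ is also trivial as a pointed set.

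Feeding this vanishing into the long exact sequence of the triple $(E^-_{[0,1]},E^R_{[0,1]},E^R_0)$ with base point $x_0^R$,
$$\cdots\to\pi_i(E^R_{[0,1]},E^R_0,x_0^R)\to\pi_i(E^-_{[0,1]},E^R_0,x_0^R)\to\pi_i(E^-_{[0,1]},E^R_{[0,1]},x_0^R)\to\pi_{i-1}(E^R_{[0,1]},E^R_0,x_0^R)\to\cdots,$$
exactness forces the middle arrow, which is induced by the inclusion $E^R_0\hookrightarrow E^R_{[0,1]}$, to be an isomorphism (of groups for $i\geq 2$, of pointed sets for $i=1$). Combined with the base-point change of the first paragraph, this yields the claimed isomorphism.

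The argument is essentially formal once the triviality of the bundle $E^R_{[0,1]}\to\gamma([0,1])$ is in hand, so there is no serious obstacle. The only minor points demanding care are (a) arranging that the trivialization be chosen to fix the base point $x_0^R$, which is routine since one may first trivialize and then precompose with a translation in the fiber direction, and (b) quoting the relative long exact sequence of a triple with the usual pointed-set conventions at the lower end, so that the vanishing of $\pi_0(E^R_{[0,1]},E^R_0,x_0^R)$ suffices to obtain the $i=1$ case.
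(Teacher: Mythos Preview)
Your argument is correct and takes a cleaner, more algebraic route than the paper's proof. You reduce to a common base point, observe that the trivial fibration $E^R_{[0,1]}\to\gamma([0,1])$ makes $E^R_0$ a strong deformation retract of $E^R_{[0,1]}$, and then read off the isomorphism from the long exact sequence of the triple $(E^-_{[0,1]},E^R_{[0,1]},E^R_0)$. The paper instead builds an explicit ambient isotopy $\varphi_\tau$ of $E^-_{[0,1]}$: using a collar $E_{[0,1]}\cap(B_{R+\varepsilon}\setminus\mathrm{Int}\,B_R)\cong\partial E^R_0\times[0,\varepsilon]\times[0,1]$, it defines a map that retracts the $s$-coordinate to $0$ on $E^R_{[0,1]}$, damps this retraction across the collar, and is the identity outside $B_{R+\varepsilon}$; surjectivity and injectivity of the inclusion-induced map are then checked by hand by applying $\varphi_1$ to representatives and homotopies. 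Your approach avoids having to extend the retraction continuously over the ambient space $E^-_{[0,1]}$, since the exact sequence only needs $\pi_*(E^R_{[0,1]},E^R_0)=0$; the paper's approach, in exchange, produces a concrete isotopy that makes the correspondence explicit at the level of maps rather than homotopy classes.
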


Remark that since the restriction of $f$ to $E^R_{[0,1]}$ is a locally trivial fibration, there is one-to-one correspondence between the connected components of $E_0^R$ and those of $E_{[0,1]}^R$. 

\begin{proof}
Let $[h]$ be an element in $\pi_i(E_{[0,1]}^-, E^R_0, x_0^R)$, where $h:I^i\to E_{[0,1]}^-$ is a continuous map
such that $h(I^{i-1}\times \{0\})\subset E_0^R$ and $h(J_i)=x_0^R$.
Since $h(I^{i-1}\times \{0\})\subset E_0^R\subset E_{[0,1]}^R$, we may regard 
$[h]$ as an element in $\pi_i(E_{[0,1]}^-, E^R_{[0,1]}, x_0^R)$ and 
hence in $\pi_i(E_{[0,1]}^-, E^R_{[0,1]}, x_*^R)$.
Moreover, if $[h_1]=[h_2]\in\pi_i(E_{[0,1]}^-, E^R_0, x_0^R)$, then $h_1$ and $h_2$ are homotopic in $(E_{[0,1]}^-, E^R_{[0,1]}, x_*^R)$.
Thus we have a map
\[
   \Phi: \pi_i(E_{[0,1]}^-, E^R_0, x_0^R)\to \pi_i(E_{[0,1]}^-, E^R_{[0,1]}, x_*^R).
\]
We will prove that this map is bijective.

We first prove the surjectivity.
Let $[h']$ be an element in $\pi_i(E_{[0,1]}^-, E^R_{[0,1]}, x_*^R)$, where $h':I^i\to E_{[0,1]}^-$ is a continuous map such that $h'(I^{i-1}\times \{0\})\subset E_{[0,1]}^R$ and $h'(J_i)=x_*^R$.
We may replace $x_*^R$ by $x_0^R$ since they are in the same connected component of $E^R_{[0,1]}$. 
First, fix $\varepsilon>0$ sufficiently small so that 
$\partial B_{R+u}$ is transverse to $E_s$ for any $u\in [0,\varepsilon]$ and $s\in [0,1]$.
Then, $E_{[0,1]}\cap B_{R+\varepsilon}$ is diffeomorphic to $(E_0\cap B_{R+\varepsilon})\times [0,1]$. Let $(x,s)$ be the coordinates of $E_{[0,1]}\cap B_{R+\varepsilon}$,
where $x\in E_0\cap B_{R+\varepsilon}$ and $s\in [0,1]$.
Also, $E_{[0,1]}\cap (B_{R+\varepsilon}\setminus\text{Int\,}B_R)$ is diffeomorphic to 
$\partial E_0^R\times [0,\varepsilon]\times [0,1]$
and let $(x', u, s)$ be the coordinates of $E_{[0,1]}\cap (B_{R+\varepsilon}\setminus\text{Int\,}B_R)$,
where $x'\in \partial E_0^R$, $u\in [0,\varepsilon]$ and $s\in [0,1]$.
Note that $x=(x',u)$ on $E_{[0,1]}\cap (B_{R+\varepsilon}\setminus\text{Int\,}B_R)$.

Now we define an isotopy $\varphi_\tau$ in $E_{[0,1]}^-$, with parameter $\tau\in [0,1]$, by 
\[
\varphi_\tau(x,s)=
\begin{cases}
(x, (1-\tau)s) & (x,s)\in E_{[0,1]}^R, \\
(x', u, (1-\tau+\tau u/\varepsilon)s) & (x,s)\in E_{[0,1]}\cap (B_{R+\varepsilon}\setminus\text{Int\,}B_R),  \\
(x,s) & (x,s)\in E_{[0,1]}^-\setminus B_{R+\varepsilon}.
\end{cases}
\]
It satisfies that $\varphi_1(h'(I^i))\subset E_{[0,1]}^-$, $\varphi_1(h'(I^{i-1}\times\{0\}))\subset E_0^R$
and $\varphi_1(h'(J_i))=x_0^R$, that is, $[\varphi_1\circ h']\in \pi_i(E_{[0,1]}^-, E^R_0, x_0^R)$. 
The isotopy $\varphi_\tau$ shows that $\Phi([\varphi_1\circ h'])=[h']$. Thus $\Phi$ is surjective.

Next we prove the injectivity.
Let $[h_1]$ and $[h_2]$ be two elements in $\pi_i(E_{[0,1]}^-, E^R_0, x_0^R)$ such that $\Phi([h_1])=\Phi([h_2])$ in $\pi_i(E_{[0,1]}^-, E^R_{[0,1]}, x_0^R)$. 
Hence there exists a homotopy
$H:I^i\times [0,1]\to E_{[0,1]}^-$ such that $H(\phantom{x},0)=h_1$, $H(\phantom{x},1)=h_2$,
$H(I^{i-1}\times\{0\}, [0,1])\subset E_{[0,1]}^R$ and $H(J_i)=x_0^R$. 
Now we use the isotopy $\varphi_\tau$. Then 
$\varphi_1\circ H$ is a homotopy 
in $(E_{[0,1]}^-, E^R_0, x_0^R)$ such that  
$\varphi_1\circ H(\phantom{x},0)=h_1$ and 
$\varphi_1\circ H(\phantom{x},1)=h_2$. Hence $[h_1]=[h_2]$ in $\pi_i(E_{[0,1]}^-, E^R_0, x_0^R)$.
Thus $\Phi$ is injective.
\end{proof}

\begin{lemma}\label{lemma28}
$\pi_i(E_{[0,1]}^-, E^R_{[0,1]}, x_*^R)\cong \pi_i(E_{(0,1]}, E_{(0,1]}^R, x_*^R)$ 
for 
$i\geq 1$
and $x_*^R\in E_{(0,1]}^R$.
\end{lemma}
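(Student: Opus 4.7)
The plan is to construct a continuous deformation $\Psi_\tau : E_{[0,1]}^-\to E_{[0,1]}^-$, $\tau\in[0,1]$, with $\Psi_0=\mathrm{id}$, such that $\Psi_1$ sends the pair $(E_{[0,1]}^-,E^R_{[0,1]})$ into $(E_{(0,1]},E^R_{(0,1]})$, while every $\Psi_\tau$ preserves each of $E_{(0,1]}$, $E^R_{[0,1]}$ and $E^R_{(0,1]}$ and fixes the basepoint $x_*^R$. Granting such $\Psi_\tau$, the isomorphism follows from the usual sliding argument: for surjectivity of the inclusion-induced map $i_*$, any representative $h$ of a class in $\pi_i(E_{[0,1]}^-,E^R_{[0,1]},x_*^R)$ is connected to $\Psi_1\circ h$ by the homotopy $\Psi_\tau\circ h$ in the larger pair, and $\Psi_1\circ h$ represents a class in $\pi_i(E_{(0,1]},E^R_{(0,1]},x_*^R)$ with $i_*[\Psi_1\circ h]=[h]$. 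For injectivity, a homotopy $H$ in the larger pair between representatives $h_1,h_2$ of the smaller pair is replaced by $\Psi_1\circ H$, which is a homotopy in the smaller pair; combined with $\Psi_\tau\circ h_k$ (which also stays in the smaller pair since $\Psi_\tau$ preserves it), this yields $[h_1]=[h_2]$ in $\pi_i(E_{(0,1]},E^R_{(0,1]},x_*^R)$.

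To construct $\Psi_\tau$, first use the local triviality of $f$ over $\gamma([0,1])$ inside $B_R$; since $[0,1]$ is contractible, this yields a trivialization $E^R_{[0,1]}\cong E_0^R\times[0,1]$ with coordinates $(x,s)$. By the Curve Selection Lemma, for sufficiently small $\varepsilon>0$ the fibres $E_s$ are transverse to $\partial B_{R+u}$ for every $u\in[0,\varepsilon]$ and $s\in[0,1]$, exactly as in the proof of Lemma~\ref{lemma27}, and this gives collar coordinates $(x',u,s)\in\partial E_0^R\times[0,\varepsilon]\times[0,1]$ on $E_{[0,1]}\cap(B_{R+\varepsilon}\setminus\mathrm{Int}\,B_R)$. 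Choose $s_0>0$ strictly smaller than the $s$-coordinate of $x_*^R$ and set
\[
\Psi_\tau(x,s)=\bigl(x,\,(1-\tau)s+\tau s_0\bigr)\quad\text{on }E^R_{[0,1]}\text{ for }s\le s_0,
\]
\[
\Psi_\tau(x',u,s)=\bigl(x',\,u,\,s+\tau(1-u/\varepsilon)(s_0-s)\bigr)\quad\text{on the collar for }s\le s_0,
\]
and $\Psi_\tau=\mathrm{id}$ everywhere else (that is, where $s\ge s_0$ or outside $B_{R+\varepsilon}$). A direct check shows these formulas agree on overlaps (at $u=0$, at $s=s_0$, and on $\partial B_{R+\varepsilon}$), so $\Psi_\tau$ is continuous; the $s$-coordinate of $\Psi_\tau(p)$ is positive whenever the original $s$ is positive or $\tau>0$, so $\Psi_\tau$ preserves $E_{(0,1]}$ and $E^R_{(0,1]}$; at $\tau=1$ every point with $s\le s_0$ is sent to one with $s$-coordinate $s_0>0$, giving $\Psi_1(E_{[0,1]}^-)\subset E_{(0,1]}$ and $\Psi_1(E^R_{[0,1]})\subset E^R_{(0,1]}$; and $x_*^R$ lies in the identity region $s\ge s_0$, hence is fixed.

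The delicate point is the matching of $\Psi_\tau$ across the seam at $\partial B_R$, where the inside trivialization of $E^R_{[0,1]}$ has to be patched to the collar trivialization. This is handled precisely by the interpolation factor $(1-u/\varepsilon)$ in the collar formula, which equals $1$ at $u=0$ (reproducing the full inside push) and $0$ at $u=\varepsilon$ (yielding the identity outside the collar). The whole construction is a direct analogue of the annular homotopies used in the proofs of Lemmas~\ref{lemma26} and~\ref{lemma27}, and once the continuity and the preservation of the relevant subspaces are verified the statement of the lemma follows from the sliding argument of the first paragraph.
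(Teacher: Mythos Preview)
Your proof is correct and follows essentially the same approach as the paper: construct a deformation pushing $E_{[0,1]}^-$ into $E_{(0,1]}$ while preserving the relevant subspaces and basepoint, then run the surjectivity/injectivity argument exactly as in Lemma~\ref{lemma27}. The only difference is cosmetic: the paper's proof is a two-line sketch that invokes the submersion property of $f$ to obtain a pushing vector field, whereas you write out the deformation explicitly using the product coordinates already set up in Lemmas~\ref{lemma26} and~\ref{lemma27}.
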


\begin{proof}
The inclusion $E_{(0,1]}\subset E_{[0,1]}^-$ induces a map $\Phi$ from $\pi_i(E_{(0,1]}, E_{(0,1]}^R, x_*^R)$ to $\pi_i(E_{[0,1]}^-, E^R_{[0,1]}, x_*^R)$.
Since $f$ is a 
submersion on $D_{t_0}$, 
we can make a vector field that pushes $E_{[0,1]}^-$ into $E_{(0,1]}$. 
Then, the argument similar to the previous lemma shows that $\Phi$ is bijective.
\end{proof}

\begin{lemma}\label{lemma29}
$\pi_i(E_{(0,1]}, E_{(0,1]}^R, x_1^R)\cong \pi_i(E_1, E_1^R, x_1^R)$ 
for 
$i\geq 1$
and $x_1^R\in E_1^R$.
\end{lemma}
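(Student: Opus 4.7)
The plan is to construct a trivialization
\[
\Theta: E_1 \times (0,1] \to E_{(0,1]}, \qquad \Theta\bigl(E_1^R \times (0,1]\bigr) = E_{(0,1]}^R,
\]
of the ambient fibration that restricts to a trivialization of the sub-fibration. Linear contraction of the second factor onto $\{1\}$ in this product model will then yield a strong deformation retraction of the pair $(E_{(0,1]}, E_{(0,1]}^R)$ onto $(E_1, E_1^R)$ which fixes $x_1^R$, since $x_1^R$ already lies in $E_1$. The induced map on relative homotopy groups will be inverse to the one induced by the inclusion of pairs, yielding the claimed isomorphism.

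Two locally trivial fibrations over $\gamma((0,1])$ are involved: the ambient $f|_{E_{(0,1]}}: E_{(0,1]} \to \gamma((0,1])$ is locally trivial by condition~(ii), since $\gamma((0,1]) \subset S'$, and the restriction $f|_{E_{(0,1]}^R}: E_{(0,1]}^R \to \gamma((0,1])$ is locally trivial by the transversality of $\partial B_R$ to every $E_s$ that is arranged at the start of Section~\ref{sec3}. Parameterizing $\gamma$ by $s \in (0,1]$, the key step is to lift $\partial/\partial s$ to a smooth vector field $V$ on $E_{(0,1]}$ that is $f$-related to $\partial/\partial s$ and, in addition, is tangent to $\partial B_R$ at every point of $\partial B_R \cap E_{(0,1]}$. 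The backward flow of $V$ starting from $E_1$ then preserves $E_{(0,1]}^R$ and delivers the compatible trivialization $\Theta$.

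To produce $V$, I would work with local trivializations of $f|_{E_{(0,1]}}$ adapted to the hypersurface $\partial B_R \cap E_{(0,1]}$: the transversality of $\partial B_R$ to every $E_s$ is exactly the hypothesis that allows such adapted local trivializations to exist, and in each of them a lift of $\partial/\partial s$ tangent to $\partial B_R$ is immediate. Gluing these local lifts by a partition of unity on $\gamma((0,1])$ produces a global $V$, because convex combinations of lifts of $\partial/\partial s$ are again lifts, and convex combinations of vectors tangent to $\partial B_R$ remain tangent to $\partial B_R$. The main technical point of the whole argument is precisely this simultaneous lift; once $\Theta$ is in hand, the strong deformation retraction of pairs and hence the isomorphism of relative homotopy groups are standard.
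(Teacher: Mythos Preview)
Your proposal is correct and follows the same approach as the paper, whose proof is a single sentence noting that $f|_{E_{(0,1]}}$ is a trivial fibration over the contractible base $\gamma((0,1])$. You have spelled out what the paper leaves implicit, namely the construction of a trivialization compatible with the sub-bundle $E_{(0,1]}^R$ so that the retraction onto $E_1$ respects the pair.
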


\begin{proof}
Since the restriction of $f$ to $E_{(0,1]}$ is a trivial fibration and $\gamma((0,1])$ is contractible, we have the assertion. 
\end{proof}

Now Proposition~\ref{lemma210} follows from Lemmas~\ref{lemma26}, \ref{lemma27}, \ref{lemma28} and~\ref{lemma29}.

\subsection{Proof of Theorem~\ref{prop21}}

Before proving Theorem~\ref{prop21}, we show two lemmas.

\begin{lemma}\label{prop1}
Let $\delta_{S'}$ be an s-arc on 
$S'\in\mathfrak S_{f,D_{t_0}}$ at $t_0$ and $i\geq 0$.
If $f|_{f^{-1}(\delta_{S'})}$ has a non-trivial vanishing $i$-cycle, 
then $\pi_{i+1}(f, \delta_{S'}, x_0)\ne 1$,
where $x_0$ is a point in the connected component of $f^{-1}(t_0)$ intersecting the image of the vanishing $i$-cycle.
\end{lemma}

\begin{proof}
Let $\gamma:[0,1]\to D_{t_0}$ be an embedding of $[0,1]$ such that $\gamma([0,1])=\delta_{S'}$ and $\gamma(0)=t_0$. 
A non-trivial vanishing $i$-cycle is a continuous map $g:S^i\times [0,1]\to f^{-1}(\delta_{S'})$,
where $S^i$ is the $i$-dimensional sphere, such that $f(g(S^i\times\{s\}))=\gamma(s)$ and 
$g(S^i\times\{s\})$ is null-homotopic in $f^{-1}(\gamma(s))$ for $s\in (0,1]$ but $g(S^i\times\{0\})$ is not null-homotopic in $f^{-1}(t_0)$. In particular, there exists a continuous map $g':D^{i+1}\to f^{-1}(\gamma(1))$,
from a closed $i+1$-dimensional ball $D^{i+1}$, 
such that the restriction of $g'$ to $\partial D^{i+1}$ coincides with the restriction of $g$ to $S^i\times\{1\}$.
Then, there exists a continuous map $h:I^{i+1}\to f^{-1}(\gamma([0,1]))$ such that
$h(I^{i+1})=g(S^i\times [0,1])\cup g'(D^{i+1})$, 
$h(I^i\times\{0\})=g(S^i\times \{0\})\subset f^{-1}(t_0)$ 
and $h(J_{i+1})$ is a point $x_0$ on $g(S^i\times \{0\})$,
which is an element in $\pi_{i+1}(f, \delta_{S'}, x_0)$.

For contradiction, we assume that $[h]$ is trivial in $\pi_{i+1}(f, \delta_{S'}, x_0)$.
Then there exists a continuous map $h':I^{i+1}\to f^{-1}(t_0)$ such that
$h'(I^i\times\{0\})$ is homotopic to $h(I^i\times\{0\})$ in $f^{-1}(t_0)$ and $h'(J_{i+1})=x_0$.
Therefore, $h(I^i\times \{0\})$, 
which is $g(S^i\times \{0\})$, is null-homotopic in $f^{-1}(t_0)$,
which contradicts the assumption that the vanishing cycle $g$ is non-trivial.
Thus $[h]$ is non-trivial in $\pi_{i+1}(f, \delta_{S'}, x_0)$.
\end{proof}

\begin{lemma}\label{prop2}
Let $\delta_{S'}$ be an s-arc on 
$S'\in\mathfrak S_{f,D_{t_0}}$
at $t_0$ and $i\geq 1$.
If $f|_{f^{-1}(\delta_{S'})}$ has a non-trivial emerging $i$-cycle, 
then $\pi_i(f, \delta_{S'}, x_0)\ne 1$ for some $x_0\in f^{-1}(t_0)$.
\end{lemma}

\begin{proof}%[Proof of Proposition~\ref{prop2}]
We prove the contraposition. Assume that 
$\pi_i(f, \delta_{S'}, x_0)=\pi_i(E_{[0,1]}, E_0, x_0)=1$.
Let $g:S^i\times (0,1]\to E_{(0,1]}$ be an emerging $i$-cycle, where $S^i$ is the $i$-dimensional sphere.
For each $s\in (0,1]$, let $g_s:S^i\to E_s$ be the restriction of $g$ to $S^i\times\{s\}$.
By Proposition~\ref{lemma210}, we have 
$\pi_i(E_1, E_1^R, x_1^R)=1$, where $x_1^R$ is a point in the intersection of $E_1^R$ and the connected component of $E_{[0,1]}^R$ containing $x_0$.
Let $\phi: E_{(0,1]}\to  E_1$ be the projection induced by the diffeomorphism between $E_{(0,1]}$ and $E_1 \times (0,1]$.
For each $s\in (0,1]$, we define an isotopy $H_{s,\tau}$ of $\phi(g_s(S^i))$ by
$H_{s,\tau}=\phi(g_{(1-\tau)s+\tau}(S^i))$, where $\tau\in [0,1]$ is the parameter of the isotopy.
In particular, $H_{s,1}=g_1(S^i)$ for any $s\in (0,1]$. 
Since $\pi_i(E_1, E_1^R, x_1^R)=1$, we can push $g_1(S^i)$ into $E_1^R$ by a homotopy.
We denote the pushed $g_1(S^i)$ as $(S^i)^R$.

Recall that $f:E_{[0,1]}^R \to \gamma([0,1])$ is a locally trivial fibration. 
For the emerging $i$-cycle $g$, we set a continuous map $g':S^i\times [0,\varepsilon)\to E_{[0,1]}$
such that $f(g'(x,s))=\gamma(s)$ and $\phi(g'(S^i\times \{s\}))=(S^i)^R$ for $s\in [0,\varepsilon)$.
Regarding the homotopy from $g_1(S^i)$ to $(S^i)^R$ on $E_1$ as a homotopy on $E_s$ via the projection $\phi$, 
we can show that there exists a homotopy from $g_s(S^i)$ to $(S^i)^R\times \{s\}$ on $E_s$ 
for each $0<s<\varepsilon$.
Thus the emerging cycle $g$ is trivial. 
\end{proof}

\begin{proof}[Proof of Theorem~\ref{prop21}]
Since $f$ has no vanishing component at $t_0$, there is no non-trivial emerging $0$-cycle.
Then (1) $\Rightarrow$ (2) follows from  Lemmas~\ref{prop1} and~\ref{prop2}.
The implication (2) $\Rightarrow$ (3) follows from Theorem~\ref{thmA}.
Suppose that (3) holds. By~\cite[Corollary 14]{Mei02}, $f:(f^{-1}(\delta_{S'}), f^{-1}(t_0), x_0)\to (\delta_{S'},t_0, t_0)$ is a weak homotopy equivalence. Hence (1)  follows.
\end{proof}

\section{Proof of Theorem~\ref{thm11}}

Before proving  Theorem~\ref{thm11}, we show two lemmas.

\begin{lemma}\label{lemma42}
$f:f^{-1}(D_{t_0})\to D_{t_0}$ is a Serre fibration if and only if $f$ has no vanishing component at $t_0$ and $\pi_i(f, \delta', x')=1$ for any $i\in\N$, any s-arc $\delta'$ at any $t'\in D_{t_0}$ and any $x'\in f^{-1}(t')$.
\end{lemma}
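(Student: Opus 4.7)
The proof has two directions.

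For the ``only if'' direction, suppose $f:f^{-1}(D_{t_0})\to D_{t_0}$ is a Serre fibration. By Remark~\ref{rem24}, $f$ has no vanishing component at $t_0$. For any $t'\in D_{t_0}$ and any s-arc $\delta'$ at $t'$, the restriction $f|_{f^{-1}(\delta')}\to\delta'$ is a Serre fibration, since the Serre fibration property is preserved under restricting the base to a subspace. The proof of Proposition~\ref{prop21} goes through verbatim with $t_0$ replaced by $t'$---it depends only on the local triviality of $f$ over $\delta'\setminus\{t'\}$ and the transversality setup of Lemma~\ref{lemma1}, both of which hold at any $t'\in D_{t_0}$. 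Applying this analog gives $\pi_i(f,\delta',x')=1$ for all $i\in\N$ and $x'\in f^{-1}(t')$.

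For the ``if'' direction, assume (a) no vanishing component at $t_0$ and (b) $\pi_i(f,\delta',x')=1$ for all $i\in\N$, all s-arcs $\delta'$ at any $t'\in D_{t_0}$, and all $x'\in f^{-1}(t')$. I would first upgrade (a) to all of $D_{t_0}$: when $t'$ lies in an open (top-dimensional) stratum, $f$ is locally trivial at $t'$ so no vanishing component occurs; when $t'$ lies in a lower stratum, a vanishing component at $t'$ along an s-arc would propagate (via the fiber correspondences over the intervening strata of $\mathfrak S_{t_0}$) to a vanishing component at $t_0$, contradicting (a). Hence $f$ has no vanishing component at any $t'\in D_{t_0}$, and Proposition~\ref{prop21} applies at every $t'$, giving that $f|_{f^{-1}(\delta')}$ is a Serre fibration for every s-arc $\delta'$ in $D_{t_0}$.

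To deduce that $f:f^{-1}(D_{t_0})\to D_{t_0}$ is itself a Serre fibration, I apply Theorem~\ref{thmA} and show that every vanishing and every emerging cycle of $f$ over any path $\gamma:[0,1]\to D_{t_0}$ is trivial. Since triviality is a germ condition at $s=0$, it suffices to analyze the cycle on an arbitrarily small initial interval $[0,\varepsilon]$, setting $t':=\gamma(0)$. The key reduction is to show that the cycle over $\gamma|_{[0,\varepsilon]}$ is equivalent to a concatenation of cycles over s-arcs at $t'$. Using the finite stratification $\mathfrak S_{t_0}$ together with a local conical/tube structure near $t'$ provided by a Thom--Mather refinement (available by standard stratification theory), one deforms $\gamma$ (rel $\gamma(0)$) to a piecewise s-arc at $t'$ and transports the cycle along this deformation using the local triviality of $f$ over each stratum. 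Each s-arc piece is trivial by Proposition~\ref{prop21}, so the original cycle is trivial by concatenation.

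The main obstacle is precisely this reduction step: when $\gamma$ oscillates between strata of $\mathfrak S_{t_0}$ near $t'$ in an arbitrary way, straightening $\gamma$ to a piecewise s-arc at $t'$ and concurrently transporting the cycle along this deformation requires careful use of the local tube system of the Thom--Mather stratification, together with the verification that this transport preserves both the fibred structure of the cycle and its vanishing/emerging triviality status.
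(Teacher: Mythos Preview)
Your ``only if'' direction is fine and matches the paper.

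For the ``if'' direction the paper takes a much shorter route than you do. It argues by contrapositive and invokes \cite[Corollary~19]{Mei02}, which says that a homotopic submersion over an open subset of Euclidean space is a Serre fibration if and only if its restriction to every \emph{straight line} in the base is. So if $f$ fails to be a Serre fibration over $D_{t_0}$, it already fails over some straight line $\ell\subset D_{t_0}$. Since the stratification $\mathfrak S$ is semialgebraic, $\ell$ meets the strata in finitely many intervals, and the failure over $\ell$ therefore localizes to an s-arc $\delta'$ at some point $t'\in\ell$. Proposition~\ref{prop21} (together with Remark~\ref{rem24}) then produces either a vanishing component or a nontrivial $\pi_i(f,\delta',x')$, contradicting the hypotheses.

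Your approach, by contrast, attempts to verify directly that every vanishing and emerging cycle of $f$ over $D_{t_0}$ is trivial, and so must cope with cycles lying over an \emph{arbitrary continuous path} $\gamma$ in $D_{t_0}$. The obstacle you flag is real, not merely technical: a continuous $\gamma$ can oscillate between strata infinitely often in every neighbourhood of $t'=\gamma(0)$, so there is no reason it can be straightened rel $\gamma(0)$ to a finite concatenation of s-arcs using only the Thom--Mather tube system. Corollary~19 of \cite{Mei02} is exactly the device that performs this reduction (to straight lines, which are automatically tame with respect to a semialgebraic stratification); trying to bypass it amounts to reproving a nontrivial portion of Meigniez's paper. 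As written, then, your ``if'' direction has a genuine gap, and the clean fix is precisely the one the paper uses.

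A smaller remark: your propagation argument upgrading ``no vanishing component at $t_0$'' to ``no vanishing component at every $t'\in D_{t_0}$'' is sketched rather than proved; a component vanishing at $t'$ need not leave an obvious trace at $t_0$. In the paper's contrapositive formulation this step is not needed in the same form, since one lands directly on an s-arc where Proposition~\ref{prop21} applies.
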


\begin{proof}
The ``only if'' part follows from \cite[Corollary 14 (3)]{Mei02} and Theorem~\ref{prop21}. 
We prove the ``if'' part.
Suppose that $f:f^{-1}(D_{t_0})\to D_{t_0}$ is not a Serre fibration. 
By~\cite[Corollary 19]{Mei02}, 
there exists a straight line $\ell$ on $D_{t_0}$ such that $f|_{f^{-1}(\ell)}$ is not a Serre fibration. 
Since the stratification of the bifurcation set is semi-algebraic~\cite[Theorem 3.4]{DK11},
there exists an s-arc $\delta'$ on $\ell$ at some point $t'$ in $D_{t_0}$ such that $f|_{f^{-1}(\delta')}$ is not a Serre fibration.
Thus, by Remark~\ref{rem24} and
Theorem~\ref{prop21},
$f$ has no vanishing component and satisfies
$\pi_i(f, \delta', x')\ne 1$ for some $i\in\N$ and $x'\in f^{-1}(t')$.
\end{proof}

\begin{lemma}\label{lemma43}
Let $f:\R^m \to \R^n$ be a polynomial map
and $\gamma:[0,1]\to\R^n$ be an embedding of the interval $[0,1]$ into $\R^n$.
Suppose that 
$f:E_{(0,1]}\to (0,1]$ 
is a locally trivial fibration.
For $i\geq 0$, let
$g:S^i\times (0,1]\to E_{(0,1]}$ be a trivial emerging $i$-cycle, that is, there exists 
a fibered map $\bar g:S^i\times [0,\varepsilon]\to E_{[0,\varepsilon]}$ such that 
$g_s:=g(\phantom{x}, s):S^i\to E_s$ and $\bar g_s:=\bar g(\phantom{x},s):S^i\to E_s$ are homotopic in $E_s$
for each $s\in (0, \varepsilon]$, where $S^i$ is the $i$-dimensional sphere.
Then there exists a real number $\varepsilon'$ with $0<\varepsilon'<\varepsilon$ and 
a fibered map $\tilde g:S^i\times [0,1]\to E_{[0,1]}$ such that
$\tilde g_s=\bar g_s$ for $s\in [0,\varepsilon']$ and $\tilde g_s=g_s$ for $s\in [\varepsilon, 1]$,
where $\tilde g_s(x)=\tilde g(x, s)$.
\end{lemma}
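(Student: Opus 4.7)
The plan is to trivialize the fibration over the punctured interval and interpolate between $\bar g$ and $g$ on a narrow transition window.

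Because $f|_{E_{(0,1]}}:E_{(0,1]}\to \gamma((0,1])$ is a locally trivial fibration over the paracompact contractible base $\gamma((0,1])$, it admits a global trivialization $\Psi:E_{(0,1]}\to E_1\times (0,1]$ compatible with $f$; write $\Psi_s:E_s\to E_1$ for the fiberwise homeomorphism. I would fix any $\varepsilon'\in (0,\varepsilon)$ and put $s^*=(\varepsilon'+\varepsilon)/2$, and then define $\tilde g$ piecewise: on $[0,\varepsilon']$ set $\tilde g=\bar g$, on $[\varepsilon,1]$ set $\tilde g=g$, and fill in the window $[\varepsilon',\varepsilon]$ so that the pieces glue continuously to a fibered map.

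For the filling, on $[\varepsilon',s^*]$ I would invoke the hypothesis at the single fiber $s=\varepsilon'$: there exists a homotopy $H:S^i\times [0,1]\to E_{\varepsilon'}$ from $\bar g_{\varepsilon'}$ to $g_{\varepsilon'}$. Transporting $H$ into $E_1$ by $\Psi_{\varepsilon'}$ and pulling back by $\Psi_s^{-1}$, with the linear reparameterization $\tau(s)=2(s-\varepsilon')/(\varepsilon-\varepsilon')$, yields the fibered map
\[
\tilde g(x,s)=\Psi_s^{-1}\bigl(\Psi_{\varepsilon'}(H(x,\tau(s)))\bigr),
\]
which equals $\bar g_{\varepsilon'}$ at $s=\varepsilon'$ and $\Psi_s^{-1}(\Psi_{\varepsilon'}(g_{\varepsilon'}))$ at $s=s^*$. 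On $[s^*,\varepsilon]$, choose a continuous monotone surjection $\sigma:[s^*,\varepsilon]\to[\varepsilon',\varepsilon]$ with $\sigma(s^*)=\varepsilon'$ and $\sigma(\varepsilon)=\varepsilon$, and set
\[
\tilde g(x,s)=\Psi_s^{-1}\bigl(\Psi_{\sigma(s)}(g(x,\sigma(s)))\bigr),
\]
which matches the previous piece at $s=s^*$ and equals $g_\varepsilon$ at $s=\varepsilon$. All four pieces agree at the gluing points $\varepsilon'$, $s^*$, $\varepsilon$, and each is manifestly fibered and jointly continuous in $(x,s)$, so they assemble to the required $\tilde g:S^i\times[0,1]\to E_{[0,1]}$.

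The main obstacle I anticipate is that the trivialization $\Psi$ does not extend to $s=0$ in general, because $E_0$ can be a genuine atypical fiber; one therefore cannot simply deform $\bar g$ into $g$ globally through $\Psi$. This is sidestepped by confining every use of $\Psi$ to the transition window $[\varepsilon',\varepsilon]\subset(0,1]$, which is bounded away from $0$, and relying on $\bar g$ itself on $[0,\varepsilon']$; continuity of $\tilde g$ at $s=0$ is then inherited directly from the hypothesis that $\bar g$ is a fibered map on $[0,\varepsilon]$. A subsidiary concern, that $H$ might need to vary continuously in $s$, is avoided by using $H$ only at the fixed parameter $s=\varepsilon'$ and letting the entire $s$-dependence be supplied by the jointly continuous map $\Psi_s^{-1}$.
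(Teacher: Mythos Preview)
Your argument is correct and follows essentially the same strategy as the paper: trivialize $E_{(0,1]}\cong E_1\times(0,1]$, split the transition window $[\varepsilon',\varepsilon]$ at the midpoint, and use the fiberwise homotopy on one half and a reparametrization on the other. The only cosmetic difference is that the paper invokes the homotopy at the level $s=\varepsilon$ and reparametrizes $\bar g$ on $[\varepsilon',s^*]$, whereas you invoke it at $s=\varepsilon'$ and reparametrize $g$ on $[s^*,\varepsilon]$; the two constructions are mirror images of one another.
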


\begin{proof}
Let $\varphi_\tau:S^i\to E_\varepsilon$ be the homotopy in $E_\varepsilon$ in the assertion with parameter $\tau\in [0,1]$ such that $\varphi_0=g_\varepsilon$ and $\varphi_1=\bar g_\varepsilon$. 
Since $f:E_{(0,1]}\to (0,1]$ is a locally trivial fibration, $E_{(0,1]}$ is diffeomorphic to $E_1\times (0,1]$.
Let $\phi:E_{(0,1]}\to E_1$ be the projection to the first factor of  $E_1\times (0,1]$.
Then we define $\tilde g:S^i\times (0,1]\to E_1\times (0,1]$ by 
\[
   \tilde g(x,s)=\begin{cases} 
   (\phi(\bar g_s(x)),s)  & s\in (0,\ve'] \\
   (\phi(\bar g_{2s-\varepsilon'}(x)),s) & s\in [\varepsilon', \frac{\varepsilon'+\varepsilon}{2}] \\
   (\phi(\varphi_{\frac{2s-\varepsilon'-\varepsilon}{\varepsilon-\varepsilon'}}(x)),s) & s\in [\frac{\varepsilon'+\varepsilon}{2}, \varepsilon] \\
   (\phi(g_s(x)),s) & s\in [\ve,1],
   \end{cases}
\]
where we identified $E_{(0,1]}$ with $E_1\times (0,1]$ by the diffeomorphism.
The map $\tilde g$ 
is a fibered map over $(0,1]$.
Since $\tilde g$ coincides with $\bar g$ on $S^i\times (0,\ve']$, it extends to a fibered map over $[0,1]$. This map satisfies the conditions in the assertion.
\end{proof}

\begin{proof}[Proof of Theorem~\ref{thm11}]
The ``only if'' part follows from \cite[Corollary 14 (3)]{Mei02} and Theorem~\ref{prop21}. 
We prove the ``if'' part.
Suppose that $f:f^{-1}(D_{t_0})\to D_{t_0}$ is not a Serre fibration
and has no vanishing component.
If there exists an s-arc $\delta_{S'}$ with $\pi_i(f, \delta_{S'}, x_0)\ne 1$ for some 
$i\in\N$ and $x_0\in f^{-1}(t_0)$, then the assertion follows.
We assume that there is no such an s-arc for a contradiction. 
By Lemma~\ref{lemma42}, there exists an s-arc $\delta'$ at some point $t'$ in $D_{t_0}\setminus\{t_0\}$
such that $\pi_i(f, \delta', x')\ne 1$ for some $i\in\N$ and $x'\in f^{-1}(t')$.
Let $\delta''$ be an s-arc connecting $t_0$ and $t'$ and lying on the closure of the stratum containing $t'$.

\begin{figure}[htbp]
\includegraphics[scale=0.9, bb=204 601 391 712]{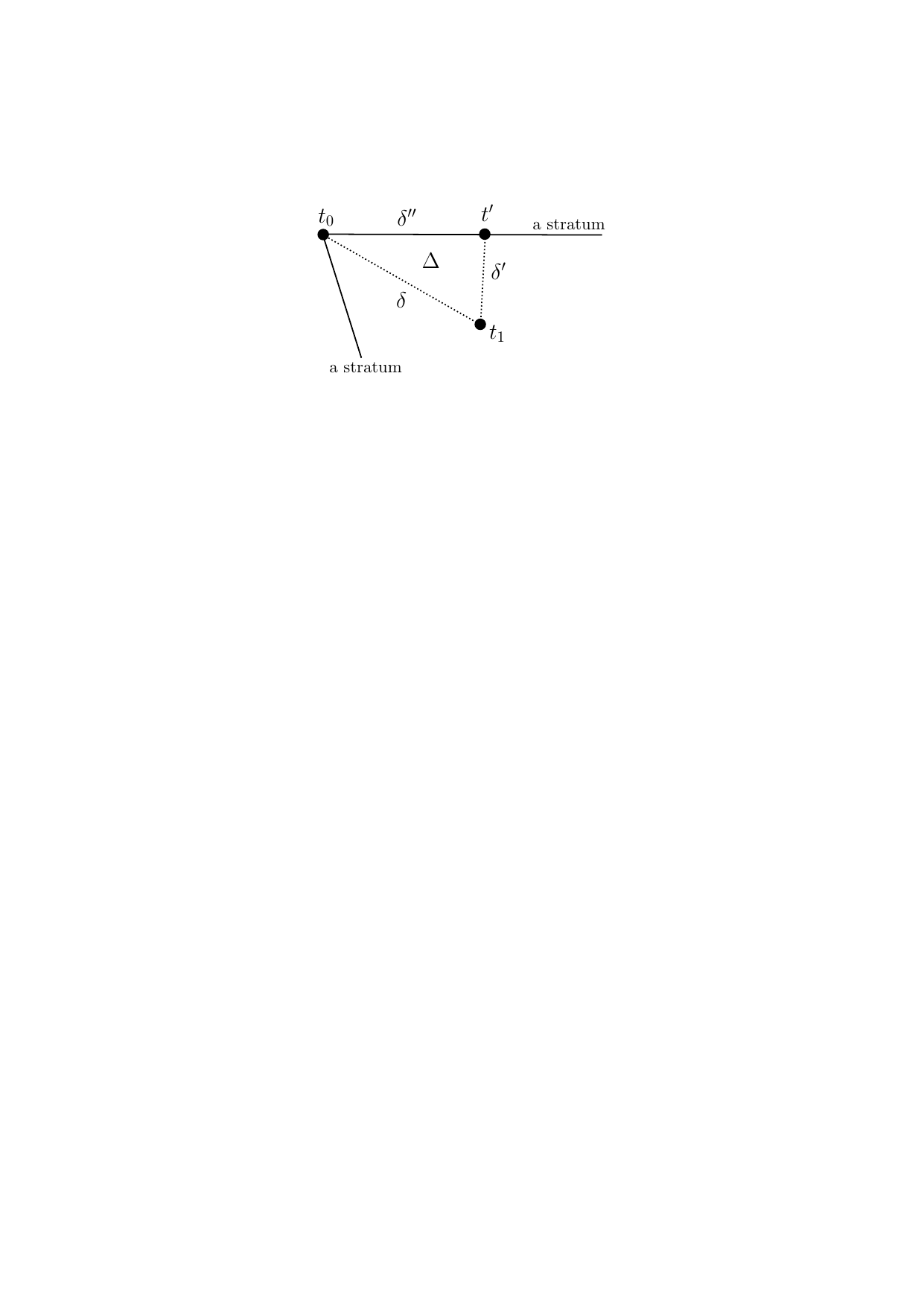}
\caption{A triangle $\Delta$ and s-arcs $\delta$, $\delta'$ and $\delta''$.}
\label{fig1}
\end{figure}

Let $g'$ be a vanishing $(i-1)$-cycle of  $f: f^{-1}(\delta')\to \delta'$, that is,
it is a fibered map
$g':S^{i-1}\times [0,1]\to f^{-1}(\gamma'([0,1]))$ such that 
$g'(S^{i-1}\times \{s\})$ is null-homotopic in $f^{-1}(\gamma'(s))$ for $s\in (0,1]$,
where $\gamma':[0,1]\to\delta'$ is a parametrization of $\delta'$ such that $\gamma'(0)=t'$.
Set $t_1=\gamma'(1)$. 
We choose a triangle $\Delta$ embedded in the closure of the stratum containing $t_1$ 
such that $\delta'$ and $\delta''$ are two of the three edges of $\Delta$,
and let $\delta$ denote the third edge of $\Delta$, see Figure~\ref{fig1}.
Suppose that $\delta''$ is parametrized as $\gamma''([0,1])=\delta''$ such that $\gamma''(0)=t_0$ and 
$\gamma''(1)=t'$. 
Then, 
since 
$f$ is a locally trivial fibration over $\delta''\setminus \{t_0\}$
and it has no vanishing component at $t_0$,
there exists a fibered map $g'':S^{i-1}\times (0,1]\to f^{-1}(\gamma''((0,1]))$ such that $g''(x, 1)=g'(x,0)$
and 
the limit of $g''(x_0,\varepsilon)$ for $\varepsilon\to 0$ exists, where 
$x_0$ is the base point of $S^{i-1}$.
The second condition is necessary to regard $g''$ as an emerging cycle, 
and is satisfied since $f$ has no vanishing component at $t_0$.

Since $\delta''$ is an s-arc at $t_0$, we have $\pi_i(f, \delta'', x_0)=1$ for any $i\in\N$
and $x_0\in f^{-1}(t_0)$ by the assumption in the first paragraph of this proof.
By Theorem~\ref{prop21} the emerging cycle $g''$ is trivial, and by Lemma~\ref{lemma43} there exists 
a fibered map $\tilde g'':S^{i-1}\times [0,1]\to f^{-1}(\gamma''([0,1]))$ such that
$\tilde g''(x, s)=g''(x, s)$ for $x\in S^{i-1}$ and $s\in (\varepsilon, 1]$,
where $\varepsilon>0$ is a sufficiently small real number.

Suppose that $\tilde g''(S^{i-1}, 0)$ is homotopic to a point $\bar x$ in $f^{-1}(t_0)$.
Concatenating $\tilde g''$ and this homotopy, we obtain a homotopy in $f^{-1}(\delta'')$ from $g''(S^{i-1},1)$ to $\bar x$.
Since $f$ over $\delta''=\gamma''([0,1])$ is a 
submersion
we can push this homotopy into $f^{-1}(\gamma''([\varepsilon_0,1]))$ for a sufficiently small real number $\varepsilon_0>0$, 
and further, since $f$ is a locally trivial fibration over $\gamma''([\varepsilon_0,1])$, we can push it into $f^{-1}(t')$.
Hence $g''(S^{i-1},1)=g'(S^{i-1},0)$ is null-homotopic in $f^{-1}(t')$,
which means that the vanishing $(i-1)$-cycle $g'$ is trivial.

Suppose that $\tilde g''(S^{i-1}, 0)$ is not null-homotopic in $f^{-1}(t_0)$.
Let $\hat g$ be the concatenation of $g'$ and $\tilde g''$. We push $\hat g(S^{i-1}\times [0,1])$ into 
$f^{-1}((\Delta\setminus\partial\Delta)\cup\{t_0\})$, 
fiberwisely, with fixing the points in $f^{-1}(t_0)$ and $f^{-1}(t_1)$. 
This is done by considering a lift of the vector field on $\Delta$ shown in Figure~\ref{fig2} that  brings $\delta'\cup\delta''$ to an s-arc $\delta_1$ at $t_0$ connecting $t_0$ and $t_1$ and lying on 
$(\Delta\setminus \partial\Delta)\cup\{t_0\}$. 
Then the lifted vector field brings $\hat g$ to 
a
fibered map $\hat g_1$ over $\delta_1$, fiberwisely.
Let $\gamma_1:[0,1]\to\delta_1$ be a parametrization of $\delta_1$ such that $\gamma_1(0)=t_0$ and 
$\gamma_1(1)=t_1$.
Since $\hat g_1(S^{i-1}, 1)$ is null-homotopic in $f^{-1}(t_1)$ and $f$ is a locally trivial fibration over 
$\Delta\setminus\partial\Delta$,
$\hat g_1(S^{i-1},u)$ is null-homotopic in $f^{-1}(\gamma_1(u))$ for any $u\in (0,1]$. Therefore, 
$\hat g_1$
is a vanishing $(i-1)$-cycle.
Since $\delta_1$ is an s-arc at $t_0$, we have $\pi_i(f, \delta_1, x_0)=1$ for any $i\geq 0$
and $x_0\in f^{-1}(t_0)$ by the assumption in the first paragraph 
of this proof.
Hence, by Theorem~\ref{prop21}, the vanishing cycle $\hat g_1$ is trivial. 
However, this contradicts the assumption that $\tilde g''(S^{i-1}, 0)$ is not null-homotopic in $f^{-1}(t_0)$.

\begin{figure}[htbp]
\includegraphics[scale=0.9, bb=204 601 391 712]{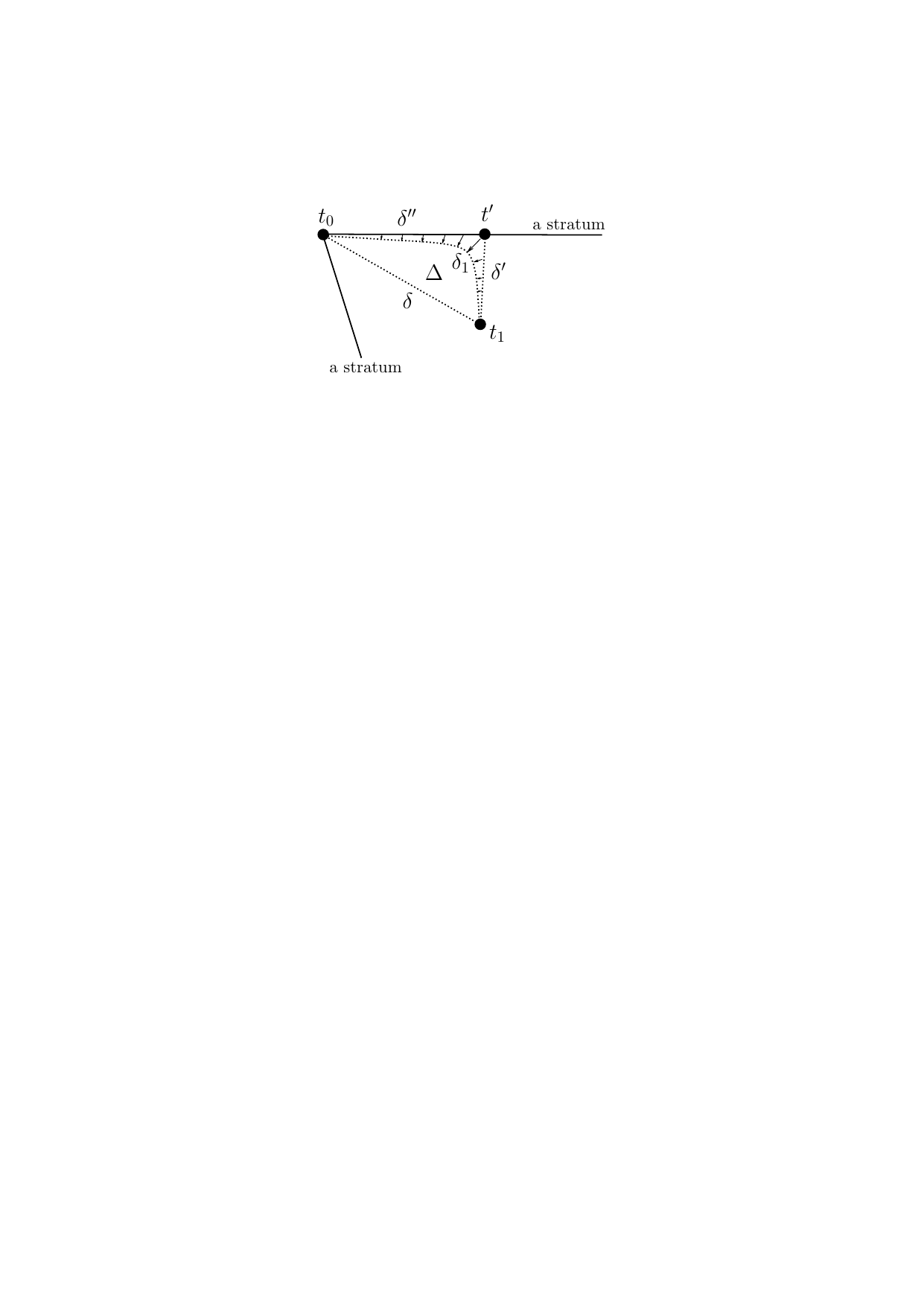}
\caption{A vector field on $\Delta$ whose lift brings $\hat g$ to $\hat g_1$.}
\label{fig2}
\end{figure}

Summarizing the arguments in the above two paragraphs, we conclude that the vanishing 
$(i-1)$-cycle $g'$ is trivial.

Next, let $h'$ be an emerging $i$-cycle of $f:f^{-1}(\delta')\to \delta'$. 
Since $f$ is a trivial fibration over 
$\Delta\setminus \partial\Delta$ 
and $f$ has no vanishing component at $t_0$,
we can obtain a fibered map
$h:S^i\times (0,1]\to f^{-1}(\gamma([0,1]))$, where $\gamma:[0,1]\to\delta$ is a parametization of $\delta$ with $\gamma(0)=t_0$ and $\gamma(1)=t_1$, such that 
\begin{itemize}
\item[(i')] $h$ is an emerging cycle of $f:f^{-1}(\delta)\to \delta$
and 
\item[(ii')] there exists an isotopy from $h(S^i\times (0,1])$ to $h'(S^i\times (0,1])$
associated with an isotopy of $\delta$ to $\delta'$ on $\Delta$
with keeping the endpoints on $t_1$ and $\delta''$.
\end{itemize}
Since $\delta$ is an s-arc of $f$ at $t_0$, it is assumed that $\pi_i(f, \delta, x_0)=1$.
Hence $h$ is a trivial emerging cycle by Theorem~\ref{prop21}, that is,
there exists a fibered map $\bar h:S^i\times [0,\varepsilon]\to f^{-1}(\gamma([0,\varepsilon]))$ such that
$h_s:=h(\phantom{x}, s):S^i\to f^{-1}(\gamma(s))$ and
$\bar h_s:=\bar h(\phantom{x}, s):S^i\to f^{-1}(\gamma(s))$ are homotopic in $f^{-1}(\gamma(s))$ for each $s\in (0, \varepsilon]$.
Consider a smooth vector field on $\Delta$ that induces the isotopy of $\delta$ to $\delta'$ in (ii') 
and make a non-zero smooth vector field in $f^{-1}(U_{\delta''})$ as a lift of this vector field,
where $U_{\delta''}$ is a small neighborhood of $\delta''$ in $\Delta$.
Pushing $\bar h$ by using this vector field, we obtain a fibered map $\bar h':S^i\times [0,\varepsilon]\to f^{-1}(\gamma'([0,\varepsilon]))$. 
For each $s\in (0,\varepsilon]$, $h'_s:=h'(\phantom{x},s)$ and $\bar h'_s:=\bar h'(\phantom{x}, s)$ are homotopic in $f^{-1}(\gamma'(s))$, which can be proved by pushing
into $f^{-1}(\gamma'(s))$ the concatenation of the inverse of the isotopy in (ii'), the homotopy in $f^{-1}(\gamma(s))$ 
and the isotopy induced by the above vector field. Thus the emerging cycle $h'$ is trivial.

Now it has been proved that any vanishing $(i-1)$-cycles and emerging $i$-cycles of $f: f^{-1}(\delta')\to \delta'$ are trivial.
Hence $\pi_i(f, \delta', x')=1$ by Theorem~\ref{prop21}, which is a contradiction.
\end{proof}

\begin{proof}[Proof of Theorem~\ref{thm01}]
If there is no vanishing component at $t_0$, then 
the assertion follows from Theorem~\ref{thm11} and Theorem~\ref{prop21}.
If there exists a vanishing component at $t_0$, then 
%$f:f^{-1}(D_{t_0})\to D_0$ is not a Serre fibration and 
there exists an s-arc over which $f$ is not a Serre fibration. Thus the assertion follows.
\end{proof}

\section{Examples}\label{sec5}

\begin{ex}[Joi\c{t}a and Tib\u{a}r~\cite{JT18}]
Let $\C^3\to\C^2$ be a polynomial map defined by
\[
f(x,y,z)=(x, ((x-1)(xz+y^2)+1)(x(xz+y^2)-1)).
\]
Over a small neighborhood of $(0,0)\in\C^2$, 
all fibers are diffeomorphic to $\C\sqcup\C$ but $f^{-1}(a,b)$ for $a\ne 0$ has a component that vanishes at infinity when $a$ goes to $0$.
Hence, there exists an s-arc $\delta$ starting at ${\bf 0}$ on which the fibered map has a non-trivial vanishing $0$-cycle and a non-trivial emerging $0$-cycle. 
In particular,  $\pi_1(f,\delta,x_0)\ne 1$ for $x_0\in f^{-1}({\bf 0})$ by Lemma~\ref{prop1}.
\end{ex}

\begin{ex}
We give an example of a fibered map, given as a restriction of a polynomial map $f:\R^5\to\R^3$, whose fibers are connected and have the same topology but that has a non-trivial vanishing $1$-cycle and a non-trivial emerging $1$-cycle. 
Let $f=(f_1,f_2,f_3)$ be a polynomial map from $\R^5$ to $\R^3$ given as
\[
\left\{
\begin{split}
   f_1(x,y,z,u,v)&=y^2+(u^2x+1)(vx-1)(x^2+(v-u^2)x+1) \\
   f_2(x,y,z,u,v)&=(z^2+u^2)-v(u^2+1)(z^2+1) \\
   f_3(x,y,z,u,v)&=u
\end{split}
\right.
\]
and set $\delta=\{(0,0,u)\in\R^3\mid u\in [0,1]\}$. 
In this section we will show that $f|_{f^{-1}(\delta)}:f^{-1}(\delta)\to \delta$ has no singularity, each fiber is diffeomorphic to $S^1\times \R$, but $f^{-1}(0,0,0))$ is an atypical fiber.

Set $g(x,u,v)=(u^2x+1)(vx-1)(x^2+(v-u^2)x+1)$. 
Regard $g$ as a polynomial of one variable $x$ and denote it as $g_{u,v}(x)$.
The condition $f_2=0$ implies $0\leq v<1$. Using this and $u\in [0,1]$, 
we can verify that the real roots of $g_{u,v}(x)=0$ are only $-\frac{1}{u^2}$ and $\frac{1}{v}$.
In particular, $g_{u,v}(x)=0$ has no multiple root.
This property will be used in the proofs below.

\begin{claim}
$f|_{f^{-1}(\delta)}$ has no singularity.
\end{claim}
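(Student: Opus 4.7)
The plan is to show that the Jacobian $DF_p\colon\R^5\to\R^3$ has full rank $3$ at every point $p\in F^{-1}(\delta)$. By the implicit function theorem this immediately implies that $F^{-1}(\delta)$ is a smooth $3$-manifold and that $F|_{F^{-1}(\delta)}\colon F^{-1}(\delta)\to\delta$ is a submersion, which is exactly the claim.

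First I would write out the Jacobian of $F$ with respect to the coordinates $(x,y,z,u,v)$. The row coming from $f_3=u$ is $(0,0,0,1,0)$, while the row from $f_2$ has last entry $\partial f_2/\partial v=-(u^2+1)(z^2+1)$, which never vanishes. Hence these two rows are linearly independent, and moreover both carry zeros in the $x$- and $y$-columns. The row from $f_1=y^2+g_{u,v}(x)$ has $x$-entry $g'_{u,v}(x)$, $y$-entry $2y$, and a zero in the $z$-column.

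Consequently the rank of $DF_p$ can drop below $3$ only when the first row lies in the span of the other two, and reading off the $x$- and $y$-columns this forces both $g'_{u,v}(x)=0$ and $y=0$. Combined with the defining relation $f_1=y^2+g_{u,v}(x)=0$, this would give $g_{u,v}(x)=0$ as well, so $x$ would have to be a multiple real root of $g_{u,v}$. But the discussion immediately preceding the claim already establishes that on $F^{-1}(\delta)$ one has $u\in[0,1]$ and $v\in[0,1)$, and that $g_{u,v}$ has no multiple real root for any such $(u,v)$. This contradiction finishes the proof.

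The argument is essentially a mechanical Jacobian computation; the only step that uses information specific to this example is the last one, where the absence of multiple real roots of $g_{u,v}$---which the author explicitly isolated just before stating the claim as the property to be used below---converts the linear-algebra condition for a rank drop into an impossibility.
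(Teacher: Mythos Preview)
Your argument is correct and matches the paper's proof almost exactly: both compute the Jacobian, observe that the second and third rows are always independent with zero entries in the $x$- and $y$-columns, deduce that a rank drop forces $\partial g/\partial x=y=0$, and then invoke the absence of multiple real roots of $g_{u,v}$ to derive a contradiction. The paper is simply terser about the linear-algebra step.
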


\begin{proof}
The Jacobian matrix $J_f$ of $f$ is
\[
 J_f=\begin{pmatrix} \frac{\partial g}{\partial x} & 2y & 0 & \frac{\partial g}{\partial u} & \frac{\partial g}{\partial v} \\
 0 & 0 & 2z-2v(u^2+1)z & 2u-2uv(z^2+1) & -(u^2+1)(z^2+1) \\
 0 & 0 & 0 & 1 & 0
 \end{pmatrix}.
 \]
Hence a singular point of $f|_{f^{-1}(\delta)}$ satisfies $\frac{\partial g}{\partial x}=y=0$.
Then, $f_1=0$ implies $g=0$, but $g_{u,v}(x)=0$ cannot have a multiple root. Hence there is no singular point.
\end{proof}

\begin{claim}\label{claim2}
$f^{-1}(0,0,0)$ is diffeomorphic to $S^1\times\R$.
\end{claim}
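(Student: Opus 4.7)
The plan is to realize $F^{-1}(0,0,0)$ as a smooth double branched cover of a closed planar region $D$, and then to identify $D$ with the closed strip $\R\times[-1,1]$, so that the double becomes $\R\times S^1\cong S^1\times\R$.

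First I would reduce the defining system. From $f_3=0$ we get $u=0$; substituting into $f_2=0$ gives $v=z^2/(z^2+1)\in[0,1)$, a smooth function of $z$. With these substitutions $f_1=0$ becomes $y^2=\alpha(x,z)$, where $\alpha(x,z):=(1-v(z)x)(x^2+v(z)x+1)$. Since $v\in[0,1)$ makes the discriminant of $x^2+vx+1$ strictly negative, the second factor is everywhere positive, so $\alpha(x,z)\geq 0$ if and only if $z^2(x-1)\leq 1$. Setting
\[
 D:=\{(x,z)\in\R^2 : z^2(x-1)\leq 1\},
\]
the boundary $\partial D$ splits into two smooth branches $\{z>0,\ x=1+1/z^2\}$ and $\{z<0,\ x=1+1/z^2\}$, each diffeomorphic to $\R$. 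The projection $\pi\colon F^{-1}(0,0,0)\to D$, $(x,y,z,0,v)\mapsto(x,z)$, is $2$-to-$1$ over the interior of $D$ (with preimages $y=\pm\sqrt{\alpha}$) and $1$-to-$1$ over $\partial D$ (where $y=0$); by the first claim the total space is smooth, so $\pi$ realizes $F^{-1}(0,0,0)$ as the smooth double of $D$ along $\partial D$.

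The key step is to show that $D$ is diffeomorphic to the closed strip $\R\times[-1,1]$ as a smooth $2$-manifold with boundary. I would verify four properties of $D$: (i) it is a smooth manifold with boundary, since $\nabla(z^2(x-1))=(z^2,2z(x-1))$ vanishes only on $\{z=0\}$, which is disjoint from $\partial D$; (ii) it is contractible via the deformation retract $r_t(x,z)=((1-t)x,z)$ onto $\{0\}\times\R$, whose image lies in $D$ by a direct sign check on $z^2((1-t)x-1)$ in the two cases $x\geq 0$ and $x<0$; (iii) $\partial D$ has two connected components, each diffeomorphic to $\R$; and (iv) $D$ has exactly two ends, a ``body'' end at $x\to-\infty$ and a ``tongue'' end at $x\to+\infty$, which are separated in $D\setminus\overline{B_R}$ for large $R$ because each branch of $\partial D$ has strictly positive minimal distance from the origin, so the middle of each branch sits inside $\overline{B_R}$. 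By the classification of noncompact surfaces with boundary, these properties determine $D$ up to diffeomorphism as the closed strip, and the smooth double of $\R\times[-1,1]$ along its two boundary lines is $\R\times S^1\cong S^1\times\R$.

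The hardest part will be the topological identification $D\cong\R\times[-1,1]$. Items (i)--(iv) are elementary, but extracting ``hence a strip'' uses classification; an explicit diffeomorphism would require a global transverse foliation of $D$ by arcs from one branch of $\partial D$ to the other---vertical segments work for $x>1$, but a careful bending construction is needed to extend such a foliation across the half-plane $\{x\leq 1\}$, which is cumbersome to write out. Invoking classification sidesteps this technicality cleanly.
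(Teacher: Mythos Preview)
Your argument is correct, but it takes a substantially different route from the paper's. The paper works directly on $X_0=F^{-1}(0,0,0)$ and exhibits an explicit proper surjective submersion $p\colon X_0\to\R$, $p(x,y,z)=x-z^2$: properness comes from a short growth estimate on $g_0$, the absence of critical points from the fact that $g_{u,v}(x)=0$ has no multiple root, and one then computes the single fibre $p^{-1}(1)$ to be a circle, so Ehresmann's theorem gives $X_0\cong S^1\times\R$ at once. Your branched-double-cover picture is geometrically appealing, but it trades that elementary fibration argument for an appeal to the classification of noncompact surfaces with boundary, which is heavier and (as you acknowledge) somewhat delicate to cite precisely. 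Two small points worth tightening: your item~(iv) as written only shows the body and tongue ends are \emph{distinct}, not that there are at most two; and the classification step really also needs the datum that each boundary line visits both ends, which is true here and is essentially what your ``the middle of each branch sits inside $\overline{B_R}$'' remark is driving at, but it deserves to be stated. Finally, note that the paper's choice of the function $x-z^2$ is not incidental to this claim alone: it is re-used in the proof that $\pi_2(f,\delta)\neq 1$, where the level set $\{x-z^2=0\}$ furnishes exactly the circle that bounds a disk in each nearby fibre yet is essential in $F^{-1}(0,0,0)$. Your approach, while valid for the present claim, would require a separate construction there.
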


\begin{proof}
Set $g_0(x,z)=g(x,0,\frac{z^2}{z^2+1})$. 
Then $f^{-1}(0,0,0)$ is identified with $X_0=\{(x,y,z)\in\R^3\mid y^2+g_0(x,z)=0\}$. Let $p:X_0\to\R$ be a smooth map defined by $p(x,y,z)=x-z^2$. 
Suppose that $p(x,y,z)$ is bounded. Then $x$ is bounded below. 
Assume that $x$ is not bounded above. 
If $vx-1>0$ 
then $g_0(x,z)=0$ has no solution. 
If $vx-1\leq 0$ and $x$ is sufficiently large then
$v=\frac{z^2}{z^2+1}\geq 0$ is close to $0$. Hence $z$ is also close to $0$.
This contradicts the assumption that $p(x,y,z)$ is bounded. Thus $p$ is proper.

A singular point of $p$ satisfies $g_0(x,z)=y=2z\frac{\partial g_0}{\partial x}+\frac{\partial g_0}{\partial z}=0$.
Since $g_0(x,z)=0$ and $y=0$, we have $x=\frac{1}{v}$. However
\[
2z\frac{\partial g_0}{\partial x}+\frac{\partial g_0}{\partial z}
%=-2zx\left(1+2v^2+\frac{x^2+2}{(z^2+1)^2}\right)\ne 0.
=\frac{2z(1+x^2)(x^2+2)}{x(z^2+1)^2}\ne 0.
\]
Thus $p$ is a submersion.

The map $p$ is a surjection since  $p(t, \sqrt{t^2+1}, 0)=t$. Hence it is a locally trivial fibration.
Since $x=z^2+1$ implies $vx=z^2$, $p^{-1}(1)$ is a circle given by $y^2+(z^2-1)(z^2+1)(z^2+2)=0$.
Hence $X_0$, which is $f^{-1}(0,0,0)$, is diffeomorphic to $S^1\times \R$.
\end{proof}

\begin{claim}
$f^{-1}(0,0,u)$ is diffeomorphic to $S^1\times \R$ for $0<u\leq 1$.
\end{claim}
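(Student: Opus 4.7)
The plan is to mirror the strategy of Claim~\ref{claim2}, with the simplification that for $u_0>0$ the coordinate $x$ is automatically bounded on the fiber, so one can project directly by $p(x,y,z)=z$ rather than $p(x,y,z)=x-z^2$. Fix $u_0\in(0,1]$. From $f_2=0$ I solve $v=v(z):=\frac{z^2+u_0^2}{(u_0^2+1)(z^2+1)}$; note that $v(z)$ lies in the interval $[u_0^2/(u_0^2+1),\,1/(u_0^2+1)]$, and in particular $v(z)>0$ for every $z\in\R$. Then $X_{u_0}:=F^{-1}(0,0,u_0)$ is identified with $\{(x,y,z)\in\R^3:y^2+g_{u_0,v(z)}(x)=0\}$.

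I would check that $p:X_{u_0}\to\R$ is a proper submersion with circle fibers; by Ehresmann's Fibration Theorem it is then a locally trivial $S^1$-bundle over a contractible base, hence trivial, which gives $X_{u_0}\cong S^1\times\R$. For properness, because $v(z)\geq u_0^2/(u_0^2+1)>0$, the inequality $-g_{u_0,v(z)}(x)\geq 0$ confines $x$ to $[-1/u_0^2,\,1/v(z)]\subset[-1/u_0^2,\,(u_0^2+1)/u_0^2]$, so $x$ is bounded; then $y^2=-g_{u_0,v(z)}(x)$ is bounded on any bounded $z$-interval, and since $X_{u_0}$ is closed in $\R^3$, $p$ is proper.

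For the submersion property, a critical point of $p$ on $X_{u_0}$ would satisfy $\partial_y(y^2+g_{u_0,v(z)}(x))=2y=0$ and $\partial_x(y^2+g_{u_0,v(z)}(x))=g'_{u_0,v(z)}(x)=0$, which together with $y^2+g_{u_0,v(z)}(x)=0$ would force $x$ to be a multiple real root of $g_{u_0,v(z)}$; but the preamble to this section has already ruled out multiple real roots. The same observation shows that $X_{u_0}$ is itself a smooth $2$-manifold. Finally, for each $z_0$ the level set $p^{-1}(z_0)$ is the planar curve $y^2+g_{u_0,v(z_0)}(x)=0$; since $g_{u_0,v(z_0)}$ is nonpositive on $[-1/u_0^2,\,1/v(z_0)]$ with simple zeros only at the two endpoints and positive elsewhere, a direct check (two smooth branches $y=\pm\sqrt{-g_{u_0,v(z_0)}(x)}$ meeting transversely at the endpoints) shows this curve is a smooth embedded circle.

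I do not expect a serious obstacle here: structurally the proof parallels Claim~\ref{claim2}, and the only new ingredient is that $u_0>0$ keeps $v(z)$ bounded away from $0$, which in turn bounds $x$ and lets the simpler projection $p=z$ work throughout. The finest technical point --- the simplicity of the real roots of $g_{u_0,v(z)}$, used both to make $p$ a submersion and to identify each fiber with $S^1$ --- has already been isolated and verified in the paragraph at the top of the section.
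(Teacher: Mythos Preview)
Your proposal is correct and matches the paper's own argument essentially line for line: the paper also projects by $q_u(x,y,z)=z$, proves properness from the fact that $v$ stays bounded away from $0$ and $1$ when $u>0$, rules out critical points via the absence of multiple real roots of $g_{u,v}$, and then checks that a single fiber is a circle. Your version is slightly more explicit (giving the exact interval $[-1/u_0^2,\,(u_0^2+1)/u_0^2]$ for $x$ and spelling out why each level curve is an embedded $S^1$), but the strategy is identical.
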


\begin{proof}
Set $g_u(x,z)=g(x,u,\frac{z^2+u^2}{(u^2+1)(z^2+1)})$. 
Then $f^{-1}(0,0,u)$ is identified with $X_u=\{(x,y,z)\in\R^3\mid y^2+g_u(x,z)=0\}$. 
Let $q_u:X_u\to\R$ be a smooth map defined by $q_u(x,y,z)=z$.
Since $u\ne 0$, we have $0<v<1$. 
Suppose that $z$ is bounded. Then $v$ cannot be close to $0$ and $1$, and hence 
$g_u(x,z)$ is bounded below.
Then, $y^2+g_u(x,z)=0$ implies that $x$ and $y$ are bounded. Thus $q_u$ is proper.

A singular point of $q_u$ satisfies
$g_u(x,z)=y=\frac{\partial g_u}{\partial x}=0$.
However, $g_{u,v}(x)=0$ has no multiple root. 
Hence $q_u$ is a  submersion.

For each $z$, $v$ is determined by $v=\frac{z^2+u^2}{(u^2+1)(z^2+1)}$. 
Since $0<u\leq 1$, we have $v>0$. 
The point
$(x,y,z)=(\frac{1}{v},0,z)$ is 
in $q_u^{-1}(z)$,
which means that $q_u$ is a surjection, that is, it is a locally trivial fibration.
We can easily verify that $q_u^{-1}(0)=\{(x,y,0)\mid  y^2+g_u(x,0)=0\}$ is a circle. Thus $X_u$, which is $f^{-1}(0,0,u)$, is diffeomorphic to $S^1\times \R$.
\end{proof}

\begin{claim}\label{claim4}
The fibered map $f$ over $\delta$ has a non-trivial vanishing $1$-cycle and a non-trivial emerging $1$-cycle.
\end{claim}

\begin{proof}
Let $r:f^{-1}(\delta)\cap\{x-z^2=0\}\to \R$ be a map defined by $r(x,y,z,u,v)=u$.
Assume that there is a sequence on  $f^{-1}(\delta)\cap\{x-z^2=0\}$ with $|z|\to\infty$. 
Since $x\to\infty$ and $vx=\frac{(z^2+u^2)z^2}{(u^2+1)(z^2+1)}\to\infty$,
we see that $f_1=0$ has no solution. This means that there is no such a sequence.
If the $z$-coordinate of a sequence on $f^{-1}(\delta)\cap\{x-z^2=0\}$ is bounded, then 
$f_1=0$ implies that the $y$-coordinate is also bounded.
Hence the map $r$ is proper.

A singular point of $r$ satisfies $g(x,u,v)=y=0$, $f_2=0$ and $x-z^2=0$ and further satisfies either
\begin{itemize}
\item[(i)] $z=0$ or
\item[(ii)] $z\ne 0$ and 
$(u^2+1)(z^2+1)\frac{\partial g}{\partial x}+(1-v(u^2+1))\frac{\partial g}{\partial v}=0$.
\end{itemize}
The real roots of $g_{u,v}(x)=0$ are only $x=-\frac{1}{u^2}$ and $x=\frac{1}{v}$.
Thus, case (i) cannot happen.
In case (ii), since $x=-\frac{1}{u^2}$ contradicts $x-z^2=0$, we can assume that $x=\frac{1}{v}$.
However $(u^2+1)(z^2+1)\frac{\partial g}{\partial x}+(1-v(u^2+1))\frac{\partial g}{\partial v}=0$
implies $x^2+u^2+1=0$ and this has no solution. Hence $r$ is a submersion.
It has been proved in Claim~\ref{claim2} that $r^{-1}(0)$ is a circle. Hence the fiber of the proper submersion $r$ is a circle.

Next we show that the map $r_u:f^{-1}(0,0,u)\cap \{x-z^2\geq 0\}\to \R$ for $0<u\leq 1$ defined by $r_u(x,y,z)=x-z^2$ has only one singular point of Morse index $2$.
For each $u\in (0,1]$, the set of singular points of $r_u$ 
%satisfies 
is determined by
$g(x,u,v)=y=0$, $f_2=0$ and either (i) or (ii).
Since $x-z^2\geq 0$, the possible real root of $g_{u,v}(x)=0$ is $x=\frac{1}{v}$.
By $f_2=0$, we have
\begin{equation}\label{eq41}
   (u^2+1)(z^2+1)=x(z^2+u^2).
\end{equation}
In case~(i), 
$(x,y,z,u,v)=(\frac{u^2+1}{u^2},0,0,u,\frac{u^2}{u^2+1})$ is a singular point of $r_u$.
In case~(ii), 
we have $(1-x+z^2)(u^2+1)+x^2=0$,
and we
can confirm that this and equation~\eqref{eq41} have no common solution.

At the singular point $(\frac{u^2+1}{u^2},0,0,u,\frac{u^2}{u^2+1})$, the Hessian of $r_u$ is given as
\[
   \begin{pmatrix}  -2/\frac{\partial g}{\partial x} & 0 \\ 0 & -2 \end{pmatrix},
\]
where $\frac{\partial g}{\partial x}>0$. 
Hence
it is a Morse singularity of index $2$.
Then, we can conclude that, for each $0<u\leq 1$, the circle 
$r^{-1}_u(0)$ bounds a disk on $f^{-1}(0,0,u)$. 
On the other hand, the circle $r_0^{-1}(0)$ coincides with $p^{-1}(0)$,
where $p:X_0\to\R$ is the map in the proof of Claim~\ref{claim2}, 
and this circle is non-trivial in $\pi_1(X_0)=\pi_1(S^1\times \R)$ as seen in the proof. 
Hence the family of these circles is a non-trivial vanishing $1$-cycle over $\delta$.
If this fibered map does not have a non-trivial emerging $1$-cycle, then $f^{-1}(0,0,0)$ cannot be diffeomorphic to $S^1\times \R$. Hence it has a non-trivial emerging $1$-cycle.
\end{proof}

In particular, $\pi_2(f, \delta, x_0)$ and $\pi_1(f,\delta,x_0)$, $x_0\in f^{-1}(0,0,0)$,
are non-trivial by Lemmas~\ref{prop1} and~\ref{prop2}.
\end{ex}

\end{document}